\newtheorem{thm*}{\bf Theorem}[section]
\newtheorem{thm}{\bf Theorem}[section]
\newtheorem{cor}{\bf Corollary}[section]
\newtheorem{lemma}{\bf Lemma}[section]
\newtheorem{prop}{\bf Proposition}[section]
\newtheorem{defn}{\bf Definition}[section]
\newtheorem{rem}{\bf Remark}[section]
\newcommand{\<}{\langle}
\renewcommand{\>}{\rangle}
\newcommand{\w}{\omega}
\newcommand{\sh}{\operatorname{sh}}
\newcommand{\ch}{\operatorname{ch}}
\renewcommand{\phi}{\varphi}
\newcommand{\A}{\operatorname{\mathfrak{a}}}
\renewcommand{\Im}{\operatorname{Im}}
\renewcommand{\Re}{\operatorname{Re}}
\newcommand{\RR}{{\mathbb R}}
\newcommand{\CC}{{\mathbb C}}
\newcommand{\NN}{{\mathbb N}}
\newcommand{\ZZ}{{\mathbb Z}}
\newcommand{\E}{{\mathcal E}}
\begin{document}

 \title{
\Large{Hyperbolic Eisenstein series  
 for geometrically finite\\
  hyperbolic surfaces of infinite volume.}
}

\author{ Th\'er\`ese \textsc{Falliero}}
\address{ Universit\'e d'Avignon et des Pays de Vaucluse,
Laboratoire de math\'ematiques d'Avignon (EA 2151),
F-84018 Avignon, France}

\email{therese.falliero@univ-avignon.fr}

\subjclass[2010]{Primary 30F30, 32N10, 47A10  ; Secondary 53C20,
11M36, 11F12}

\keywords{Harmonic differential, Eisenstein series, Degenerating surfaces}

\maketitle

\begin{abstract} Let $M$ be a geometrically finite hyperbolic
surface of infinite volume.  After writing down the spectral
decomposition for the Laplacian on 1-forms of $M$, we generalize
the Kudla and Millson's construction of hyperbolic Eisenstein
series 
and other related
results (see theorems \ref{dual}, \ref{infinite}, \ref{deg}, \ref{vitali}). \end{abstract}

\maketitle
\section*{ Introduction.}
\medskip 
The spectrum  of the Laplace-Beltrami operator for a compact Riemann surface is discrete, it is 
no more the case when $M$ is not compact. For example when we withdraw one point from $M$, it appears a continuous part in the spectrum whose spectral measure is described by an Eisenstein series. The study of the limiting behavior of the spectrum of the Laplace-Beltrami operator for a degenerating family of Riemann surfaces with finite area hyperbolic metric have been used to explain this apparition (see for example \cite{w2}, \cite{jit}, \cite{he}). This paper finds  
one of its motivation in the general study of approximation to Eisenstein series (see for example  the
question of L.Ji in \cite{jit}, p.308, line 15, concerning the
approximation of Eisenstein series by suitable eigenfunctions of
a degenerating family of hyperbolic Riemann surfaces). We hope to
surround it via hyperbolic Eisenstein series (for  results on
degenerating Eisenstein series  see, for example \cite{o2},
\cite{o4}, \cite{jorg}, \cite{pippich}). What we really do here
is to develop the suggestion in \cite{kmi}, to construct
hyperbolic Eisenstein series and harmonic dual forms in the
infinite volume case: in this context we verify the convergence
of hyperbolic Eisenstein series and the fact that it permits to
realize a harmonic dual form to a simple closed geodesic on a
geometrically finite hyperbolic surface of infinite volume
(theorem \ref{dual}) and in a similar way of an infinite
geodesic joining a pair of punctures (theorem \ref{infinite}).
Moreover in section 5 we generalize the definition of hyperbolic Eisenstein series to the case of $q-$forms (see section 5 formula (\ref{nor})). In  particular we obtain  a degeneration of hyperbolic Eisenstein series to
horocyclic ones (theorem \ref{vitali}(2)). Since it is in the center of our motivation let's be a little  more precise on this new result.\\
 Let $(S_l)_l$ be a degenerating family of Riemann surfaces with infinite area hyperbolic metric, $S_l$ having a funnel $F_l$ whose boundary geodesic is denoted $c_l$. We denote
$\Omega_{c_l}=\Omega_l$ the
hyperbolic Eisenstein series  of Kudla-Millson theory associated to the pinching geodesic $c_l$ (see section 2), $S_0$ the component of the limiting surface containing the cusp $\infty$ (see section 5.1) and ${\mathcal E}_\infty$ the horocyclic Eisenstein series associated to the limiting cusp (see formula (\ref{eh})).
 For $s\in\{\Re s>-1/2\}$, except possibly a finite number of points in $(-1/2,0)$, the family of meromorphic 1-forms
$\frac{1}{l^{s+1}}\Omega_l(s,.)$ converges uniformly on
compact subsets  of $S_0$ to $\frac{\Gamma\left(1+
\frac{s}{2}\right)}{\Gamma\left( \frac{1}{2}\right)\Gamma\left(
\frac{1}{2}+ \frac{s}{2}\right)}\Im{\mathcal E}_\infty(s+1,.)$.





\section{Preliminary definitions}
Let us recall the standard analytic and geometric notations which will be used. 
In this paper a surface is a connected orientable two-dimensional manifold, without boundary unless otherwise specified. We denote by $H$ the hyperbolic upper half-plane endowed with its standard metric of constant gaussian curvature $-1$.
A topologically finite (i.e. finite Euler characteristic) surface is  a surface homeomorphic to a compact surface with finitely many points excised and
a geometrically finite hyperbolic surface $M$ is a
topologically finite, complete Riemann surface of constant
curvature -1.
 We will require that $M$ is of infinite volume, 
  then there exists a finitely
generated, torsion free, discrete subgroup, $\Gamma$, of
$PSL(2,\mathbb R)$, unique up to conjugation, such that $M$ is
the quotient of $ H$ by
$\Gamma$ acting as M\"obius transformations, $\Gamma$ is a Fuchsian group of the second kind and $\Gamma$ has no elliptic elements. The group $\Gamma$ admits a finite sided polygonal fundamental domain in $H$. We recall now
the description of the fundamental
domain of $M=\Gamma \backslash H $ (see \cite{borth}).
 Let $L(\Gamma)$ be the limit
set of $\Gamma$, that is the set of limit points (in the Riemann-sphere topology) of all orbits $\Gamma z$ for $z\in H$ and $O(\Gamma)={\mathbb R }\cup \{\infty\} -
L(\Gamma)$. As $L(\Gamma)$ is closed in $\mathbb R \cup
\{\infty\}$, $O(\Gamma)$ is open and so can be written as a
countable union  $O(\Gamma)=\cup_{\alpha \in A}O_\alpha$ where
the $O_\alpha$ are disjoint open intervals in $\mathbb R \cup
\{\infty\}$. Then let $\Gamma_\alpha=\{\gamma \in \Gamma,
\gamma(O_\alpha)=O_\alpha\}$.\\ This is an elementary
hyperbolic subgroup of $\Gamma$. The fixed points of
$\Gamma_\alpha$ are exactly the end-points of $O_\alpha$. There
is a finite subset $\{\alpha(1),
\alpha(2),...,\alpha(n_f)\}\subset A$ so that, for $\alpha\in A$,
$O_\alpha$ is conjugate to precisely one $O_{\alpha(j)}$ $(1\leq
j \leq n_f)$. Let $\lambda_\alpha$ be the half-circle, lying in
$H$, joining the end-points of $O_\alpha$. Let $\Delta_\alpha$
be the region in $H$ bounded by $O_\alpha$ and $\lambda_\alpha$.
The $\Delta_\alpha$ $(\alpha \in A)$ are mutually disjoints.\\ Let
$P$ be the set of parabolic vertices of $\Gamma$, and for $p\in
P$ let $\Gamma_p$ be the parabolic subgroup of $\Gamma$ fixing
$p$.  There is a finite subset $\{p(1), p(2),...,p(n_c)\}\subset
P$ so that $\Gamma_p$ is conjugate to precisely one
$\Gamma_{p(j)}$ $(1\leq j \leq n_c)$. A circle lying in $H$ and tangent to $\partial H$ at $p$ is called a horocycle at $p$. We can construct an open disc 
$C_p$ determined by a horocycle  at $p\in P$ so that: \begin{eqnarray*} (i)&\text{if }\,
p,\, q\in P, p\not= q, \text{then }\, C_p\cap C_q=\emptyset,\\
(ii)& \gamma(C_p)=C_{\gamma(p)} \, (\gamma\in \Gamma),\\ (iii)&
C_p\cap \Delta_\alpha=\emptyset\, \, \,  (p\in P,\alpha\in A).\\
\end{eqnarray*} If we consider the set
$H-(\bigcup_{p\in P} C_p\cup \bigcup_{\alpha\in A}\Delta_\alpha)$, 
we see that it is invariant under $\Gamma$. We can find a
finite-sided fundamental domain $D$ for the action of $\Gamma$ on
this set; $D$ is relatively compact in $H$.
 \begin{prop} There is a
 fundamental domain $D$ for $\Gamma$ of
the form
$$D=K^*\cup \cup_{j=1}^{n_f}D_{\alpha(j)}\cup_{k=1}^{n_c} D_{p(k)}^*$$
where\\ 1) $K^*$ is relatively compact in $H$, \\ 2) $D_{\alpha(j)}$ is
a standard fundamental domain of $\Gamma_{\alpha(j)}$ on $\Delta_{\alpha(j)}$,\\ 3)
$D_{p(k)}^*$ is a standard fundamental domain for $\Gamma_{p(k)}$ on $C_{p(k)}$.
\end{prop}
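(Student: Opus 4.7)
The plan is to build $D$ as a disjoint union of three kinds of pieces---one relatively compact piece sitting in the ``convex core'' region, one piece per $\Gamma$-conjugacy class of funnels, and one piece per $\Gamma$-conjugacy class of cusps---and then to check that the result is a fundamental domain for $\Gamma$ on all of $H$.

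First I use that $X := H \setminus \bigl(\bigcup_{p\in P} C_p \cup \bigcup_{\alpha\in A}\Delta_\alpha\bigr)$ is $\Gamma$-invariant: the discussion preceding the proposition has already provided a finite-sided, relatively compact fundamental domain for the action of $\Gamma$ on $X$, which I take to be $K^*$. For the funnel pieces, each $\Gamma_{\alpha(j)}$ is elementary hyperbolic, hence cyclic with axis $\lambda_{\alpha(j)}$, and acts freely and properly discontinuously on $\Delta_{\alpha(j)}$; a standard fundamental domain $D_{\alpha(j)}$ is the ``wedge'' bounded by $\lambda_{\alpha(j)}$ and its image under the generator. For the cusp pieces, each $\Gamma_{p(k)}$ is cyclic parabolic; choosing a coordinate in which $p(k)=\infty$ and $C_{p(k)}$ becomes a half-plane $\{\Im z > h_k\}$, the standard fundamental domain $D_{p(k)}^*$ is a vertical strip whose width equals the translation length of the generator.

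It then remains to verify that $D := K^* \cup \bigcup_{j=1}^{n_f} D_{\alpha(j)} \cup \bigcup_{k=1}^{n_c} D_{p(k)}^*$ meets every $\Gamma$-orbit essentially once. For existence, any $z\in H$ lies either in $X$, in which case its orbit meets $K^*$; or in some $C_p$, in which case $p$ is $\Gamma$-conjugate to a $p(k)$, so by condition (ii) of the horocycle construction some $g\in \Gamma$ sends $C_p$ to $C_{p(k)}$, after which an element of $\Gamma_{p(k)}$ moves $g(z)$ into $D_{p(k)}^*$; the funnel case is analogous. Uniqueness follows from the disjointness conditions (i)--(iii), which force the three kinds of pieces in $D$ to be pairwise disjoint, together with the fact that each individual piece is already a fundamental domain for the relevant elementary stabilizer.

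The only delicate point is the compatible $\Gamma$-equivariant choice of horocycles and funnel half-discs $\Delta_\alpha$ ensuring conditions (i)--(iii)---but this has already been arranged by the construction recalled just above the statement. With those compatibilities in place, the argument is the standard ``gluing'' fundamental-domain construction for geometrically finite Fuchsian groups of the second kind and presents no further obstacle.
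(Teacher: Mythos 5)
Your overall strategy is the right one, and in fact the paper offers no proof at all for this proposition --- it is simply recalled from Borthwick's book after the construction of the sets $\Delta_\alpha$, $C_p$ and the relatively compact fundamental domain for the truncated region. So your ``gluing'' argument (existence of an orbit representative via the decomposition $H=X\cup\bigcup C_p\cup\bigcup\Delta_\alpha$, uniqueness via the disjointness conditions (i)--(iii) and the $\Gamma$-equivariance $g(C_p)=C_{g(p)}$) is exactly the standard route and supplies what the paper leaves implicit.

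One concrete slip: your description of the funnel piece $D_{\alpha(j)}$ is wrong as written. The half-circle $\lambda_{\alpha(j)}$ is the \emph{axis} of the hyperbolic cyclic group $\Gamma_{\alpha(j)}$, so it is preserved setwise by the generator; the ``wedge bounded by $\lambda_{\alpha(j)}$ and its image under the generator'' is therefore empty and cannot serve as a fundamental domain. The correct standard fundamental domain is obtained by conjugating the fixed points of $\Gamma_{\alpha(j)}$ to $0$ and $\infty$, so that $\Delta_{\alpha(j)}$ becomes a quarter-plane $\{\Re z>0\}$ and $\Gamma_{\alpha(j)}=\langle z\mapsto e^{l}z\rangle$; one then takes the annular sector $\{1\le|z|\le e^{l}\}\cap\Delta_{\alpha(j)}$, i.e.\ the region between a geodesic \emph{orthogonal} to the axis and its image under the generator, intersected with $\Delta_{\alpha(j)}$. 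This is the analogue of the strip you correctly describe in the cusp case, and with this correction the rest of your argument goes through unchanged.
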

We should note that $p\not= 0$ if and only if $\Gamma$ is of the
second kind. 

 The Nielsen region of the group $\Gamma$ is the set
$\tilde{N}=H-(\cup_{\alpha\in A} \Delta_\alpha)$, the truncated Nielsen
region of $\Gamma$ is $\tilde{K}=\tilde{N}-(\cup_{p\in P} C_p)$,
$K=\Gamma\backslash \tilde{K}$ is called the compact core of
$M$. So the surface $M=\Gamma\backslash H$ can be decomposed into a finite area surface with geodesic boundary $N$, called the Nielsen region, on which infinite area ends $F_i$ are glued: the funnels. The Nielsen region $N$ is itself decomposed into a compact surface $K$ with geodesic and horocyclic boundary on which non compact, finite area ends $C_i$ are glued: the cusps. We have
$ M=K\cup C\cup F\, ,$
where $C=C_1\cup...\cup C_{n_c}$ and $F=F_1\cup ...\cup F_{n_f}$.

A hyperbolic transformation $T\in PSL(2,\RR)$ generates a cyclic
hyperbolic group $\<T\>$. The quotient $C_l=\<T\>\backslash H$
is a hyperbolic cylinder of diameter $l=l(T)$. By conjugation,
we can identify the generator $T$ with the map $z\mapsto e^l z$,
and we define $\Gamma_l$ to be the corresponding cyclic group. A
natural fundamental domain for $\Gamma_l$ would be the region
${\mathcal F}_l=\{z\in H, 1\leq |z|\leq e^l\}$. The $y-$axis is the lift
of the only simple closed geodesic on $C_l$, whose length is $l$.
The standard  funnel of diameter $l>0$, $F_l$, is the half  hyperbolic cylinder $\Gamma_l\backslash H$, $F_l=({\mathbb R}^+)_r\times ({\mathbb R}\backslash {\mathbb Z})_x$
with the metric $ds^2=dr^2+l^2\cosh^2(r)dx^2$.\\We can always conjugate a parabolic cyclic group $\<T\>$ to the group
$\Gamma_\infty$ generated by $z\mapsto z+1$, so the parabolic
cylinder is unique up to isometry. A natural fundamental domain
for $\Gamma_\infty$ is ${\mathcal F}_\infty=\{0\leq \Re z\leq
1\}\subset H$. The standard cusp $C_\infty$ is the half parabolic cylinder $\Gamma_\infty\backslash H$, $C_\infty=([0, \infty[)_r\times ({\mathbb R}\backslash {\mathbb Z})_x$ with the metric $ds^2=dr^2+e^{-2r}dx^2$.
The funnels $F_i$ and the cusps $C_i$ are isometric to the preceding standard models.
We define the function $r$ as the distance to the
compact core $K$ and the function $\rho$ by
$$\rho(r)=\left\{\begin{array}{cc}2e^{-r}&\text{ in $ F$}\\ e^{-r}&\text{ in $C$}\end{array}\right.\, .$$
We will adopt $(\rho,t)\in(0,2]\times {\RR}/l_j{\ZZ}$ as the standard
coordinates for the funnel $F_j$, where $t$ is arc length around
the central geodesic at $\rho=2$.\\ For the cusp our standard
coordinates $(\rho, t)\in (0,1]\times {\RR}/{\ZZ}$ are based on the
model defined by the cyclic group $\Gamma_\infty$. The cusp boundary is $y=1$, so that $y=e^r$ and
$\rho=1/y$. We set  $t \equiv x \pmod \ZZ$.

\section{ Hyperbolic Eisenstein series on a geometrically finite hyperbolic surface of  infinite volume.}
\subsection{ Return to  Kudla and Millson hyperbolic Eisenstein
series' definition.}
In the following, $M$ will denote an arbitrary Riemann surface
and $L^2(M)$, the Hilbert space of square integrable 1-forms
with inner product
$$(w_1,w_2)=\frac{1}{2}\int_M w_1\wedge *\overline{w_2}\, ,$$
and corresponding norm $||.||_{L^2}$. The pointwise norm of a 1-form $w$ is defined by
$w\wedge *\overline{w}=||w||^2*1$ where $*1$ is the volume form.
\\ Let $c$ be a simple closed curve
on $M$. We may associate with $c$ a real smooth closed
differential $n_c$ with compact support such 
\begin{equation}\label{cons}
\int_c \omega =\int_M\omega\wedge n_c\, ,
\end{equation}
 for all closed differentials $\omega$.
Since every cycle $c$ on $M$ is a finite sum of cycles
corresponding to simple closed curves, we conclude that to each
such $c$, we can associate a real closed differential $n_c$
with compact support such that (\ref{cons}) holds.\\ Let $a$ and
$b$ be two cycles on the Riemann surface $M$. We define the
intersection number of $a$ and $b$ by
$$a.b=\int_M n_a\wedge n_b\, .$$

In \cite{kmi}, Kudla and Millson construct the harmonic 1-form
dual to a  simple closed geodesic on a hyperbolic surface of
finite volume $M$ in terms of Eisenstein series. Let us recall
the definition: \begin{defn} Let $\eta$ be a simple closed
geodesic or an infinite geodesic joining $p$ and $q$. A 1-form
$\alpha$ is dual to $\eta$ if for any closed 1-form with compact
support, $\omega$
$$\int_M \omega \wedge \alpha = \int_{\eta}\omega\ .$$\label{a}
Or equivalently: for any closed oriented cycle $c'$, we have

\begin{equation}\label{def}
\int_{c'}\alpha =\eta\ .\ c'.
\end{equation}

\end{defn}

\medskip
 Kudla and Millson construct a meromorphic family of forms on $M$,
called hyperbolic Eisenstein series associated to an oriented simple closed geodesic $c$. Let $\tilde{c}$ be a component of the
inverse image of $c$ in the covering $H\rightarrow M$ and $\Gamma_1$ the stabilizer of $\tilde{c}$ in $\Gamma$.
The hyperbolic Eisenstein series are expressed in Fermi coordinates in the following way for $\Re s >0$:
\begin{equation}\label{eisenhy}
\Omega_c(s,z) =\Omega(s,z) =\frac{1}{k(s)}\sum_{\Gamma_1 \backslash \Gamma }
\gamma^*\frac{dx_2}{(\cosh x_2)^{s+1}},\ \,
k(s):=
\frac{\Gamma\left(\frac{1}{2}\right)\Gamma\left(
\frac{1}{2}+\frac{s}{2}\right)}{\Gamma\left(1+
\frac{s}{2}\right)}.\end{equation}
By applying an element of $SL(2,\mathbb R)$ we may assume that
$\tilde{c}$ is the $y-$axis in $H$ and that $\Gamma_1$ is
generated by $\gamma_1:z\mapsto e^lz$. The Fermi coordinates
$(x_1,x_2)$ associated to $\tilde{c}$ are related to euclidean
polar coordinates by \begin{eqnarray*} r&=&e^{x_1},\\
\sin\theta&=&\frac{1}{\cosh x_2}\, . \end{eqnarray*}
So we can write $\Omega(s,z) = \text{Im}(\Theta(s,z))$ with 
\begin{equation}\label{lien}
 \Theta(s,z)=\frac{1}{k(s)}\sum_{\Gamma_1 \backslash \Gamma }
\gamma^*\left(\frac{y^s}{z|z|^s}dz\right).
\end{equation}
 At the end of
their paper  they do the remark that ``it is also
interesting to consider  the infinite volume case".

\subsection{
The infinite volume case.} We are going to verify that this
definition retains a meaning in the case of a geometrically
finite  hyperbolic  surface of  infinite volume, $M=\Gamma
\backslash H$, $\Gamma$ is a Fuchsian group of the second kind without elliptic elements. We identify $M$ locally with its universal cover $H$. By $d(z,w)$ we denote the hyperbolic distance from $z\in H$ to $w\in H$. For $z_0\in H$ and $\epsilon >0$, by $B(z_0,\epsilon)\subset H$ we denote the radius $\epsilon$ hyperbolic metric ball centered at $z_0$. 
 \begin{prop}\label{kudla} The hyperbolic
Eisenstein series $\Omega(s,z)$ converges  for $\Re s >0$,
uniformly on compact subsets of $H$, is bounded on $M$ and
represents a  $C^\infty$ closed form which is dual to $c$.
Moreover it is an analytic function of $s$ in $\Re s >0$.
\end{prop}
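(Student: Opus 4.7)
The strategy is the classical one for Eisenstein-type series: estimate each summand pointwise in Fermi coordinates, compare with a Patterson--Sullivan Poincar\'e series to obtain convergence, and then unfold the duality integral.

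In Fermi coordinates $(x_1,x_2)$ the metric reads $dx_2^2+\cosh^2x_2\,dx_1^2$, so $dx_2/(\cosh x_2)^{s+1}$ has pointwise hyperbolic norm $(\cosh x_2)^{-(\Re s+1)}$; since M\"obius transformations are isometries and $x_2(w)=\pm d(w,\tilde c)$,
\[
\bigl|\gamma^*\tfrac{dx_2}{(\cosh x_2)^{s+1}}\bigr|_z
= (\cosh d(\gamma z,\tilde c))^{-(\Re s+1)}
\leq 2^{\Re s+1}\,e^{-(\Re s+1)d(\gamma z,\tilde c)}.
\]
For each coset $\Gamma_1\gamma$ I would choose the representative whose perpendicular foot on $\tilde c$ lies in a fixed fundamental segment $I\subset\tilde c$ of length $l$; then $d(\gamma z,\tilde c)\geq d(\gamma z,w_0)-l$ for a base-point $w_0\in I$, so the series is dominated by $e^{(\Re s+1)l}\sum_{\gamma\in\Gamma}e^{-(\Re s+1)d(z,\gamma w_0)}$. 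Because $\Gamma$ is Fuchsian of the second kind, its critical exponent $\delta(\Gamma)<1$, hence this scalar Poincar\'e series converges for $\Re s+1>\delta$, in particular for every $\Re s>0$, uniformly on compact subsets of $H$. The same estimate applied to termwise $s$- and $z$-derivatives gives analyticity in $s$ (Weierstrass) and $C^\infty$ regularity in $z$. Closedness passes to the sum because each $\gamma^*\tfrac{dx_2}{(\cosh x_2)^{s+1}}$ is even exact in the transverse direction and $d$ commutes with the locally uniform limit. For boundedness on $M$, I would use the decomposition $M=K\cup C\cup F$: on the compact core continuity suffices; in each funnel and cusp, $d(\gamma z,\tilde c)\to\infty$ for every coset as one moves out the end, and the Poincar\'e estimate together with $\delta<1$ yields a uniform bound.

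The duality is the geometric heart. Given closed $\omega$ with compact support on $M$, lift to a $\Gamma$-invariant form on $H$ and unfold:
\[
\int_M\omega\wedge\Omega(s)
=\frac{1}{k(s)}\sum_{\Gamma_1\backslash\Gamma}\int_D\omega\wedge\gamma^*\tfrac{dx_2}{(\cosh x_2)^{s+1}}
=\frac{1}{k(s)}\int_{\mathcal F_l}\omega\wedge\tfrac{dx_2}{(\cosh x_2)^{s+1}},
\]
the sum/integral swap being justified by compactness of $\mathrm{supp}(\omega)\cap D$ together with local uniform convergence of $\Omega$. Parametrise $\mathcal F_l$ by Fermi coordinates $(x_1,x_2)\in[0,l]\times\RR$, write $\omega=a\,dx_1+b\,dx_2$, so $\omega\wedge dx_2=a\,dx_1\wedge dx_2$. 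Closedness of $\omega$ reads $\partial_{x_2}a=\partial_{x_1}b$, while $\Gamma_1$-invariance forces $l$-periodicity of $a,b$ in $x_1$. Consequently $A(x_2):=\int_0^l a(x_1,x_2)\,dx_1$ satisfies $A'(x_2)=b(l,x_2)-b(0,x_2)=0$, so $A(x_2)\equiv A(0)=\int_c\omega$. Since $|a|\leq\|\omega\|_\infty$ and $(\cosh x_2)^{-(\Re s+1)}$ is integrable, Fubini applies, and the beta-function identity $\int_\RR(\cosh x_2)^{-(s+1)}dx_2=B(\tfrac12,\tfrac{s+1}{2})=k(s)$ delivers $\int_M\omega\wedge\Omega(s)=\int_c\omega$.

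The main obstacle is the convergence step: translating the known convergence of the scalar Patterson--Sullivan series into the geodesic-based series $\sum_{\Gamma_1\backslash\Gamma}e^{-\sigma d(\gamma z,\tilde c)}$ requires the careful choice of coset representatives above and uses that $\Gamma_1$ acts cocompactly on $\tilde c$. Boundedness on the funnels and cusps is a secondary nuisance resting on the same fact $\delta(\Gamma)<1$. Once convergence is secured, analyticity, smoothness, closedness, and the duality unfolding are essentially formal.
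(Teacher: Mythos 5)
Your proposal is correct and establishes every assertion of the proposition; it differs from the paper's argument mostly in packaging. For convergence the paper works in the half-plane model: choosing coset representatives with $\gamma z$ in the Fermi fundamental domain $\mathcal{D}_1=\{1\leq|z|\leq\beta\}$, it bounds each term by $y^{\sigma+1}(\gamma z)$, obtains locally uniform convergence from a disjoint-ball/integral comparison over the complement of the rectangle $R(-T,T)$, and obtains boundedness on $M$ from Hejhal's fundamental lemma $\sum_{\gamma\in\Gamma}y(\gamma z)^q(1+|\gamma z|)^{-2q}\leq\mathcal{C}(q,\Gamma)$. Your domination by the orbital series $\sum_{\gamma\in\Gamma}e^{-(\sigma+1)d(z,\gamma w_0)}$ is the intrinsic form of the same estimate (on $\mathcal{D}_1$ one has $y(\gamma z)\asymp e^{-d(\gamma z,\tilde{c})}$), and the appeal to $\delta(\Gamma)<1$ is actually superfluous: since $\sigma+1>1$, elementary ball-packing (area growth $e^{R}$ in $H$) gives convergence of that series uniformly in $z\in H$ for any Fuchsian group, which is precisely the content of the quoted Hejhal lemma and is what the global boundedness on $M$ really requires --- mere convergence at exponents above $\delta$ is only a pointwise/locally uniform statement, so you should invoke the uniform counting bound rather than the critical exponent at that step. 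Finally, you write out in full the duality unfolding (the reduction to $A(x_2)\equiv\int_c\omega$ and the beta integral $\int_{\RR}(\cosh t)^{-(s+1)}\,dt=k(s)$), which the paper dispatches by reference to Kudla--Millson; your computation is correct and has the added value of explaining the normalizing factor $k(s)$.
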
 The proof in the infinite volume case is as
straightforward as  in the finite volume case (\cite{kmi},
\cite{gerar}), but for the convenience of the
reader we give some details.
\begin{proof}
Recall first that if $K$ is a compact of the fundamental domain $D$ of $\Gamma$ then there exists
$\eta > 0$ such that for all  $z_0 \in K$, the balls  $\left( B(\gamma z_0,\eta)
\right )_{\gamma \in \Gamma_1 \backslash \Gamma}$ are disjoints.


Let us choose  for fundamental domain of $\Gamma_1$,
${\mathcal D}_1 = \{ z \in H : 1 \leq |z| \leq \beta \} $.
After passing to ordinary Euclidean polar coordinates $ (r,
\theta)$,   with $\sigma=\Re s$, with $||\Omega||$ denotes the pointwise norm of $\Omega$, we obtain:
 \begin{eqnarray*} ||\Omega(s,z)|| &\leq&
\frac{1}{|k(s)|} \sum_{\Gamma_1 \backslash \Gamma }
\frac{1}{(\ch x_2(\gamma z))^{\sigma +1}} \\  &\leq&
\frac{1}{|k(s)|} \sum_{\Gamma_1 \backslash \Gamma }
\left(\frac{y}{r}\right)^{\sigma +1 }(\gamma z) \leq
\frac{1}{|k(s)|} \sum_{[\gamma]\in \Gamma_1 \backslash \Gamma, \gamma z\in {\mathcal D}_1} y^{\sigma
+1}(\gamma z). \end{eqnarray*}
 Now $y^s$ is an eigenfunction of all the invariant integral operators on $H$. Let $k(z,z')$ be the point-pair invariant defined by $k(z,z')=1$ or $0$ according as the distance between $z$ and $z'$ is smaller than $\eta$. Then there exists $\Lambda_{\eta}$ independent of $z_0$ so that
$$\int_{B(z_0,\eta)}y^\sigma\frac{dxdy}{y^2}=\int_H k(z_0,z)y^\sigma\frac{dxdy}{y^2}\, $$
and $$\int_{B(z_0,\eta)}y^\sigma\frac{dxdy}{y^2}=\Lambda_{\eta}y(z_0)^\sigma\, .$$
This is a particular formulation of a more general result we will need further (see proposition \ref{mean}).\\
 We denote as in \cite{kmi}:
$R(T_1,T_2)=\{P \in {\mathcal D}_1 : T_1 <x_2(P)<T_2 \}$. So for
$T > 2\eta$ :
$$\frac{1}{|k(s)|} \sum_{\gamma \in \Gamma_1 \backslash
\Gamma ,\gamma z \not \in R(-T,T)}\frac{1}{(\ch(x_2(\gamma
z)))^{\sigma+1}}\leq \frac{1}{|k(s)|}\sum_{[\gamma] \in \Gamma_1 \backslash
\Gamma , \gamma z \in {\mathcal D}_1\backslash  R(-T,T)}y^{\sigma+1}(\gamma z).$$
We need the following:
\begin{lemma}
Let $\gamma \in \Gamma_1\backslash \Gamma $, $z,\ \zeta \in H$ such that for
 $\gamma z \not \in R(-T,T)$, $\gamma\zeta \in B(\gamma z ,
\eta)$ then $\; \gamma \zeta \not \in R(-T+2\eta,T-2\eta)$.
\end{lemma}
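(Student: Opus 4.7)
The plan rests on a single observation: the second Fermi coordinate $x_{2}$ about $\tilde c$ is, up to sign, the hyperbolic distance to the geodesic $\tilde c$. With $\tilde c$ normalized to be the $y$-axis, the hyperbolic metric reads $ds^{2}=\cosh^{2}(x_{2})\,dx_{1}^{2}+dx_{2}^{2}$ in Fermi coordinates, so $|dx_{2}|\le ds$ and hence $x_{2}$ is $1$-Lipschitz for the hyperbolic distance:
$$|x_{2}(P)-x_{2}(Q)|\le d_{H}(P,Q)\qquad(P,Q\in H).$$

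The remaining steps are essentially one-line consequences. First I would use the $\Gamma$-invariance of $d_{H}$ to rewrite the hypothesis $\gamma\zeta\in B(\gamma z,\eta)$ as $d_{H}(\gamma z,\gamma\zeta)\le\eta$, and apply the Lipschitz bound to get $|x_{2}(\gamma\zeta)-x_{2}(\gamma z)|\le\eta$. Since the lemma will only ever be used for $\gamma z\in{\mathcal D}_{1}$ (this is the context of the preceding Euclidean polar estimate), the hypothesis $\gamma z\not\in R(-T,T)$ then reduces to $|x_{2}(\gamma z)|\ge T$. The triangle inequality yields
$$|x_{2}(\gamma\zeta)|\ge|x_{2}(\gamma z)|-|x_{2}(\gamma\zeta)-x_{2}(\gamma z)|\ge T-\eta>T-2\eta,$$
which says precisely $\gamma\zeta\not\in R(-T+2\eta,T-2\eta)$.

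There is really no obstacle: the whole argument is the Lipschitz property of a signed distance function. The one point worth stating explicitly is the identification of $x_{2}$ with the signed distance to $\tilde c$, which is immediate from the definition of Fermi coordinates recalled after equation (\ref{eisenhy}). The factor $2$ in $T-2\eta$ is deliberate slack, as the proof in fact yields the sharper $T-\eta$; the extra margin is harmless and is convenient downstream, where the disjoint hyperbolic balls $B(\gamma z,\eta)$ are to be packed into the complement of the shrunken strip $R(-T+2\eta,T-2\eta)$ in order to dominate the Eisenstein tail by the convergent integral of $y^{\sigma-1}\,dx\,dy$ on that complement.
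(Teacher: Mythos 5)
Your proof is correct and rests on the same underlying idea as the paper's: displacing a point by hyperbolic distance at most $\eta$ can change its distance to the geodesic $\tilde c$ by at most a bounded amount. The paper implements this via the $1$-Lipschitz property of the orthogonal projection $P$ onto $\tilde c$ together with the three-term triangle inequality $T\le d(\gamma z,\gamma\zeta)+d(\gamma\zeta,P\gamma\zeta)+d(P\gamma z,P\gamma\zeta)$, which yields exactly the stated $T-2\eta$; you instead use the $1$-Lipschitz property of the signed distance $x_2$ itself (from $|dx_2|\le ds$ in Fermi coordinates), which is slightly more direct and gives the sharper bound $T-\eta$. Both arguments are valid, both implicitly take the representative with $\gamma z\in{\mathcal D}_1$, and as you observe the extra slack in the constant is harmless for the subsequent ball-packing estimate.
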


\begin{proof}
Let $\pi: H \rightarrow \tilde{c}$ be the orthogonal projection on $\tilde{c}$. As  $\pi$ is 1- lipschitzien, we have for the hyperbolic distance $d(\pi\gamma z,\pi\gamma
\zeta)\leq d(\gamma z, \gamma \zeta)\leq \eta$ . If $x_2(\gamma
z)\geq T$ :
$$T\leq d(\gamma z,\pi \gamma z)\leq d(\gamma z,\gamma \zeta)+d(\gamma
\zeta,\pi \gamma \zeta)+d(\pi \gamma z , \pi \gamma \zeta)\, .$$
 Then
$$T-2\eta \leq d(\gamma \zeta,\pi \gamma \zeta)\, .$$

\end{proof}
Then
\begin{eqnarray*}
\sum_{\gamma \in \Gamma_1 \backslash
\Gamma, \gamma z \not \in R(-T,T) }y^{\sigma+1}(\gamma
z)&=&\frac{1}{\Lambda_\eta}\sum_{\gamma \in \Gamma_1 \backslash
\Gamma,\gamma z \not \in R(-T,T) }\int_{B(\gamma z,\eta)}
y^{\sigma+1}\,\frac{dxdy}{y^2} \\
  &\leq& \frac{1}{\Lambda_\eta}\int_{R^c(-T+2\eta,T-2\eta)}y^{\sigma+1}\,\frac{dxdy}{y^2}
\\
\end{eqnarray*}

where $R^c(-T+2\eta,T-2\eta)$ is the complementary in
${\mathcal D}_1$ of $R(-T+2\eta,T-2\eta)$. Note that if
 $\gamma z \not \in R(-T,T)$ then $y(\gamma z) \leq \displaystyle
\frac{\beta}{\ch T}$, so:

\begin{eqnarray*} \sum_{{\gamma \in \Gamma_1 \backslash
\Gamma}, {\gamma z \not \in R(-T,T) }}y^{\sigma+1}(\gamma z)&\leq&
\frac{\beta}{\Lambda_\eta}\int_0^{\frac{\beta}{\ch(T-2\eta)}}y^{\sigma-1}\,dy
\\ &\leq& \frac{\beta}{\Lambda_\eta \sigma}\left(
\frac{\beta}{\ch(T-2\eta)}\right)^\sigma~~~.\\ \end{eqnarray*}
From this follows the uniform convergence of $\Omega(s,z)$ on
compact subsets of  $H$, uniformly on compact subsets of the
half plane $\Re s>0$.\\ We next show that $\Omega(s,z)$ is bounded on
$D$.  For this we use  a very useful
fundamental lemma (see \cite{heker}, p. 178,
\cite{h2}, p.27):

\begin{prop} For any Fuchsian group $\Gamma$, there exists
${\mathcal C}(q,\Gamma)$, such that for all $z\in H$

$$\sum_{\gamma \in \Gamma}\frac{y(\gamma z)^q}{[1+|\gamma z|]^{2q}} \leq
{\mathcal C}(q,\Gamma).$$
The constant $\mathcal{C}(q,\Gamma)$  depends only of  $q$ and $\Gamma$.

\end{prop}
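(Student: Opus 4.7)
The plan is to recast the sum as a classical orbital sum against a decaying function of hyperbolic distance, then apply a uniform orbit-counting estimate for the orbit of a fixed basepoint. A direct algebraic calculation yields the invariant identity
$$\frac{y(w)}{|w+i|^2}=\frac{1}{2(1+\ch d(w,i))},$$
which follows from $|w+i|^2=|w|^2+2y+1$ together with $\ch d(w,i)=1+|w-i|^2/(2y)$. Since $|w+i|$ and $1+|w|$ are comparable uniformly on $H$, the summand differs from $(1+\ch d(\gamma z,i))^{-q}$ by a multiplicative constant depending only on $q$. Using $d(\gamma z,i)=d(z,\gamma^{-1}i)$ and relabeling $\gamma\mapsto\gamma^{-1}$, the problem reduces to bounding the orbital sum
$$S(z):=\sum_{\gamma\in\Gamma}\frac{1}{(1+\ch d(z,\gamma i))^q}$$
uniformly in $z\in H$.

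Next I would obtain a uniform counting bound for the orbit $\Gamma\cdot i$. Since $\Gamma$ is discrete and contains no non-trivial elliptic elements, the stabilizer of $i$ is trivial and there exists $\eta=\eta(\Gamma)>0$ such that $d(\gamma i,\gamma' i)\geq 2\eta$ for all distinct $\gamma,\gamma'\in\Gamma$; hence the hyperbolic balls $B(\gamma i,\eta)$ are pairwise disjoint. Comparing hyperbolic volumes inside $B(z,R+\eta)$, the counting function
$$N(z,R):=\#\{\gamma\in\Gamma:d(z,\gamma i)\leq R\}$$
satisfies $N(z,R)\leq (\ch(R+\eta)-1)/(\ch\eta-1)$, a bound independent of $z$ and hence uniform on $H$.

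Finally, since $(1+\ch t)^{-q}$ decays like $e^{-qt}$ at infinity, a layer-cake decomposition over integer distance shells gives
$$S(z)\leq\sum_{n\geq 0}\bigl(N(z,n+1)-N(z,n)\bigr)\frac{1}{(1+\ch n)^q}\leq C(q,\Gamma)\sum_{n\geq 0}e^{(1-q)n},$$
which is finite for $q>1$ (the regime of interest in the applications, where the relevant exponent is $q=\sigma+1>1$) and bounded solely in terms of $q$ and $\Gamma$. The main subtlety is precisely the uniformity in $z$: a point $z$ may lie arbitrarily deep in a cusp of $\Gamma\backslash H$ where the injectivity radius of the quotient becomes arbitrarily small, so any attempt to pack disjoint balls around the points $\gamma z$ is doomed. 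The trick is to pack instead around the fixed basepoint $i$, whose minimum orbit-separation is a single positive constant $\eta(\Gamma)$, converting the estimate into a purely geometric ball-growth count in $H$ that is insensitive to the location of $z$ in a fundamental domain.
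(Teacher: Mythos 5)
Your argument is correct, but note that the paper does not actually prove this proposition: it is quoted verbatim as Hejhal's ``fundamental lemma'' with references to \cite{heker} and \cite{h2}, so there is no internal proof to compare against. What you supply is a legitimate self-contained proof, and it is in the same spirit as the standard one (and as the $\Lambda_\eta$ ball-packing argument the paper itself uses just above to prove the convergence of $\Omega(s,z)$): the identity $y(w)/|w+i|^2=\bigl(2(1+\cosh d(w,i))\bigr)^{-1}$ checks out ($|w+i|^2=|w|^2+2y+1$ and $\cosh d(w,i)=(|w|^2+1)/(2y)$), the comparability $1+|w|\leq\sqrt{2}\,|w+i|\leq\sqrt{2}(1+|w|)$ holds on $H$, and the packing of disjoint $\eta$-balls around the orbit $\Gamma\cdot i$ inside $B(z,R+\eta)$ gives the uniform counting bound $N(z,R)=O(e^{R})$, from which the shell sum converges for $q>1$. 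Two small caveats. First, the proposition as stated says ``any fuschian group,'' which may contain elliptic elements; your claim that the stabilizer of $i$ is trivial then needs either a different (non-fixed) basepoint or the observation that the stabilizer of any point in a Fuchsian group is finite, which only multiplies $N(z,R)$ by a constant depending on $\Gamma$ --- in the paper's setting $\Gamma$ is torsion-free, so this is harmless. Second, your restriction to $q>1$ is not a defect but a necessary hypothesis that the paper leaves implicit: for a group of the first kind the sum diverges at $q=1$, and the application here has $q=\sigma+1>1$. With those two remarks your proof is complete, and its main merit over the paper's bare citation is precisely the point you emphasize: packing balls around the fixed orbit $\Gamma\cdot i$ rather than around $\gamma z$ is what makes the bound uniform in $z$, including deep in the cusps.
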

Let  $z\in H$, there exists a system of representants $S$ of
$\Gamma_1 \backslash \Gamma $ such that for all $\gamma \in S, \;
|\gamma z|\leq \beta $. Then:

\begin{eqnarray*}
\sum_{\Gamma_1 \backslash \Gamma}\frac{y(\gamma
z)^{\sigma+1}}{(1+\beta)^{2(\sigma+1)}}&\leq& \sum_{\Gamma_1 \backslash
\Gamma}\frac{y(\gamma z)^{\sigma+1}}{(1+|\gamma
z|)^{2(\sigma+1)}}\\
&\leq& \sum_{
\Gamma}\frac{y(\gamma z)^{\sigma+1}}{(1+|\gamma z|)^{2(\sigma+1)}}\\
&\leq&{\mathcal C}(\sigma +1,\Gamma)
\end{eqnarray*}
and the result follows.

The fact that $\Omega(s,z)$ is dual to $c$ follows straightly
from the construction of Kudla and Millson.
\end{proof}
\section{ Spectral decomposition and analytic continuation.}
 The aim is to realize the injection
 $H^1_c \rightarrow {\mathcal H}^1$,
 where $H^1_c$ is the first de Rham's cohomology group with
compact support of $M$ and ${\mathcal H}^1$ is the space of
$L^2$ harmonic 1-forms of $M$. Recall that in our context $\dim
{\mathcal H}^1=\infty$ (see \cite{carron}, p. 27).\\
We are going  to prove, as in
\cite{kmi}, the analytic continuation  of the hyperbolic
Eisenstein series. The essential difference  with the finite
volume case is the spectral decomposition of $L^2(M)$.
 
 \subsection{Spectral theory.}
For any non-compact geometrically finite hyperbolic surface $M$,
the essential spectrum of the (positive) Laplacian $\Delta_M$
defined by the hyperbolic metric on $M$ (the Laplacian on
functions) is $[1/4, \infty)$ and this is absolutely continuous.
The discrete spectrum consists of finitely many eigenvalues in
the range $(0,1/4)$. In the finite-volume case one may also have
embedded eigenvalues in the continuous spectrum, but these do not
occur for infinite-volume surfaces. Then if $M$ as infinite
volume, the discrete spectrum of  $\Delta_M$
is finite (possibly empty).
The exponent of convergence
$\delta$ of a Fuchsian group $\Gamma$ is defined to be the
abscissa of convergence of the Dirichlet series:
$$\delta=\inf\{  s> 0, \sum_{T\in \Gamma} e^{-s d(z,Tw)} < \infty\}$$
for some $z,w\in H$ and $d(z,w)$ denotes again the hyperbolic distance from $z\in H$ to $w\in H$.

Let $\Gamma$ be a Fuchsian group of the second kind and
$L({\Gamma})$ be its limit set, then $0<\delta<1$ with $\delta >
1/2$ if $\Gamma$ has parabolic elements. Patterson and Sullivan
showed that $\delta$ is the Hausdorff dimension of the limit set
when $\Gamma$ is geometrically finite. Furthermore, if
$\delta>1/2$, then $\delta(1-\delta)$ is the lowest eigenvalue
of the Laplacian $\Delta_M$. The connection to spectral theory
was later extended to the case $\delta \leq 1/2$ by Patterson.
In this case, the discrete spectrum of $\Delta_M$ is empty and
$\delta$ is the location of the first resonance.
For a detailed account of the spectral theory of infinite area surfaces, we
refer the reader to \cite{borth}.

\subsection{Tensors and automorphic forms.}
This section introduces the notations used in the following subsection \ref{gene} and section \ref{degene}. Let $M$ be a geometrically finite hyperbolic surface. Let $z$ be a local conformal coordinate and $ds^2=\rho|dz|^2$ the Poincar\'e metric. Let $\w_M=T^*M$ be the holomorphic cotangent bundle of $M$ and for any integers $n$ and $m$, let $\E^{r,s}(M,\w_M^n\otimes {\overline \w}_M^m)$ be the vector space of smooth differential forms of type $(r,s)$ on $M$ with values in the line bundle $\w_M^n\otimes {\overline \w}_M^m$ .
 For an integer $q$,  a $q-$form (or $q-$differential) is an element of $T^q=\E^{0,0}(M,\w_M^q)$,
 the space of  tensors of type $q$ on $M$ written locally as $f(z)(dz)^q$.  $M$ may be realized as $\Gamma\backslash H$, where $H$ is the upper half plane and $\Gamma$ a discrete subgroup of $PSL(2,\RR)$.
 The hyperbolic metric on $M$ induces the natural scalar product
$$\left(\phi,\psi\right)=\int_{\Gamma\backslash H} \phi(z)\overline{\psi(z)}y^{2q-2}\, dxdy\, ,$$
on $T^q$. Let $\mathfrak{H}^q$ be the $L^2-$closure of $T^q$ with respect to this scalar product.

 We recall now the link between $q-$forms and automorphic forms of weight\footnote{Some authors
called them of weight $q$ or $-2q$} $2q$, called also $q-$automorphic forms for the following reason. As before we make use of the uniformization theorem.
%

Using notably
the notations in \cite{h1} and \cite{fayreine}, set
$$j_\gamma(z)=\frac{(cz+d)^2}{|cz+d|^2}=\frac{cz+d}{c\bar{z}+d}=\left(\frac{\gamma'z}{|\gamma' z|}\right)^{-1}\, \,
 \gamma=\left(\begin{array}{cc}a&b\\c&d\end{array}\right)\in
\Gamma .$$ Let $\mathcal F_q$ be the space of all
functions $f: H\rightarrow \CC$ with
$$f(\gamma z)=j_\gamma(z)^q f(z),\,\,\,\,\, \gamma\in \Gamma\, ,$$
 and if $\mathcal D=\Gamma\backslash H$ is the fundamental
domain of $\Gamma$, define the Hilbert  space
${\mathcal H}_q=\{f\in {\mathcal F_q}, \<f,f\>_{\mathcal D}=\int_{\mathcal D} |f(z)|^2\, d\mu(z)<\infty\}$ with  areal measure $d\mu(z)=\frac{dxdy}{y^2}$ and the inner product $\<f,g\>=\int_{\mathcal D}f(z)\overline{g(z)}d\mu(z)$. An element in ${\mathcal H}_q$ is called an automorphic form of weight $2q$. ${\mathcal H}_q$ is isometric to ${\mathfrak H}^q$ through the correspondence 
$$I: {\mathfrak H}^q \ni f \mapsto y^q f\,\in {\mathcal H}_q  .$$
 Maass introduced the differential operators
$$\begin{array}{ccc}
 L_q =& (\bar{z}-z)\frac{\partial}{\partial\bar{z}}-q  &: {\mathcal F}_q\rightarrow {\mathcal F}_{q-1}\\ K_q=&(z-\bar{z})\frac{\partial}{\partial z}+q  &: {\mathcal F}_q\rightarrow {\mathcal F}_{q+1}\end{array}$$ 
%

 We  also have:
\begin{eqnarray*}
L_q&=&-2iy^{1+q}\frac{\partial}{\partial\bar{z}}y^{-q}=\overline{K_{-q}},\\
K_q&=&2iy^{1-q}\frac{\partial}{\partial z}y^{q}=\overline{L_{-q}}.
\end{eqnarray*}
We denote
\begin{eqnarray*}
-L_{q+1}K_q&=&-\Delta_{2q}+q(q+1),\\
-K_{q-1}L_q&=&-\Delta_{2q}+q(q-1)
\end{eqnarray*}
with $$\Delta_{2q}=y^2(\frac{\partial^2}{{\partial x}^2}+\frac{\partial^2}{{\partial y}^2})-2iqy\frac{\partial}{\partial x}\, .$$
These second order differential operators are self-adjoint on ${\mathcal H}_q$.

Now, the metric and complex structure determine a covariant derivative


 $$\nabla=\nabla^q\oplus\nabla_q: \E^{0,0}(M,\w_M^q)\longrightarrow \E^{1,0}(M,\w_M^q)\oplus \E^{0,1}(M,\w_M^q) $$
on the line bundle $\w_M^q$. With the identifications  $\E^{1,0}(M,\w_M^q)\cong T^{q+1}$ and $\E^{0,1}(M,\w_M^q)\cong T^{q-1} $, we have 
   
$\nabla^q:T^q \rightarrow  T^{q+1}$,
%
%
$\nabla_q:T^q \rightarrow  T^{q-1}$.

 Under the correspondence $I$, the operators $\nabla_q$, $\nabla^q$ go over to the Maass operators  according to the commutative diagram
$$\begin{array}{ccccc}{\mathfrak H}^{q-1}&\overset{\nabla_q}{\leftarrow}&{\mathfrak H}^q&\overset{\nabla^q}{\rightarrow}&{\mathfrak H}^{q+1}\\\downarrow& &\downarrow& &\downarrow\\\mathcal H_{q-1}&\overset{L_q}{\leftarrow}&\mathcal H_q&\overset{K_q}{\rightarrow}&\mathcal H_{q+1}\end{array}$$
and so are given locally by $\nabla^q=2i\rho^q\partial \rho^{-q}$, $\nabla_q=-2i\rho^{-1}\bar{\partial}$.

 The Laplacians $\Delta_q^+$ and $\Delta_q^{-1}$ on $T^q$ are defined by $\Delta_q^+=-\nabla_{q+1}\nabla^q$, $\Delta_q^-=-\nabla^{q-1}\nabla_q$  and then the isometry $I$ conjugates $\Delta_q^+$ with $-\Delta_{2q}+q(q+1)$ and $\Delta_q^-$ with $-\Delta_{2q}+q(q-1)$.  Thus $\Delta_0=\Delta_0^{\pm}$ is the Laplacian on functions. The operators $\Delta_q^{\pm}$ are non-negative self adjoint.

We are first interested in the case $q=2$. Let $\Delta_{\text{Diff}}$  the
(positive) Laplacian on 1-forms on a geometrically finite
hyperbolic surface, $\Delta_{\text{Diff}}=d\delta + \delta d$,
$\delta=-*d*$ with $*$ the Hodge operator. In the following we
denote  $\Delta_{\text{Diff}}=\Delta$. If $\omega$ is a 1-form in the holomorphic
cotangent bundle,  $\omega=f(z)\, dz$, then we define the image by the isometry $I$,
 $I(w)=I(f\, dz )= yf(z)=\tilde{f}(z)$.  We have the relation $y\Delta(f\,dz)=-(\Delta_2
\tilde{f}) dz$, in other words, with the preceding notations $\Delta =\Delta_1^-$.\\ 

\subsection{Generalized eigenfunctions.}\label{gene} We are going to give the spectral
expansion in eigenforms of $\Delta$; we use \cite{fayreine},
\cite{patt}, \cite{borth}.  With the notations of the section 1, for a finitely generated group of the second
kind, for each cusp and for each funnel of the quotient there is a
corresponding Eisenstein series, this is what we are going to
develop now.

\begin{prop}\label{noyau}
For $\Re s>\delta$, the kernel of the resolvent $G_s(z,w,1)$ for the self-adjoint operator $\Delta_2$ acting on the Hilbert space ${\mathcal H}_1$ of automorphic forms of weight 2, is given by the convergent series
$$G_s(z,w,1)=\sum_{\gamma\in \Gamma}j_\gamma(w)g_s(z,\gamma w, 1)\, $$
with $g_s(z,w,
1)=-\displaystyle\frac{w-\bar{z}}{z-\bar{w}}\frac{\Gamma(s+1)\Gamma(s-1)}{4\pi
\Gamma(2s)}\cosh^{-2s}(d(z,w)/2)F(s+1,s-1,s;\cosh^{-2}(d(z,w)/2))\, $ and $F$ is the
Gauss hypergeometric function.

\end{prop}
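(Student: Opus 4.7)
The plan is to recognize $g_s(z,w,1)$ as the integral kernel of the free resolvent of $\Delta_2$ on the universal cover $H$, then average it against $\Gamma$ with the cocycle factor $j_\gamma(w)$ that makes the result automorphic of weight $2$ in $w$, and finally to check that the series converges in the stated half-plane and really represents the resolvent.

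First I would verify the free-space identity: the function $g_s(z,w,1)$ is the classical Green function for $\Delta_2+s(1-s)$ acting on weight-$2$ tensors on $H$. Because the operator is $SL(2,\RR)$-invariant, its kernel depends only on the point-pair invariant, and the standard reduction to an ODE in the hyperbolic distance yields a hypergeometric equation whose decaying solution at infinity is precisely $\sigma^{-s}F(s+1,s-1,s;\sigma^{-1})$; the prefactor $(w-\bar z)/(z-\bar w)$ carries the weight-$2$ transformation in $w$, and the constant is fixed by matching the distributional singularity on the diagonal. This can be quoted from \cite{fayreine} / \cite{h1} and is not original to the proof.

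Next I would address automorphy. The function $\gamma\mapsto j_\gamma(w)$ is the standard weight-$2$ automorphy factor: it is a $1$-cocycle, $|j_\gamma(w)|=1$, and multiplying $g_s(z,\gamma w,1)$ by $j_\gamma(w)$ exactly absorbs the transformation law of $g_s$ in its second variable when $w$ is replaced by $\gamma w$. A short cocycle manipulation then shows that each term of $G_s$ transforms correctly as $w\mapsto\gamma_0 w$ for $\gamma_0\in\Gamma$, and the sum is manifestly $\Gamma$-invariant in $z$. So termwise the series has the right automorphy; summing preserves it once convergence is established.

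The main technical step, and the one I expect to be the real obstacle, is convergence for $\Re s>\delta$. The key analytic input is the classical asymptotics $\sigma^{-s}F(s+1,s-1,s;\sigma^{-1})=O(\sigma^{-s})$ as $\sigma\to\infty$, together with $\sigma(z,w)\asymp e^{d(z,w)}$ for large hyperbolic distance. Therefore, away from the (locally finite) set of $\gamma$ with $\gamma w$ close to $z$, one has the pointwise bound
\[
|j_\gamma(w)\,g_s(z,\gamma w,1)|\le C(s,z,w)\,e^{-(\Re s)\,d(z,\gamma w)}.
\]
By the very definition of the exponent of convergence recalled in Section~2, $\sum_{\gamma\in\Gamma}e^{-\sigma\,d(z,\gamma w)}<\infty$ for $\sigma>\delta$, and the convergence is locally uniform in $(z,w)$ and in $s$ on compact subsets of $\{\Re s>\delta\}$. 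The finitely many terms where $\gamma w$ is near $z$ are handled separately by the integrable singularity of $g_s$ on the diagonal of $H\times H$. Holomorphy in $s$ then follows from the usual Weierstrass argument.

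Finally I would identify the sum with the kernel of the resolvent. Since $(\Delta_{2,z}+s(1-s))g_s(z,\gamma w,1)=\delta_{z=\gamma w}$ on $H$ and the convergence is uniform on compacta, one may apply $\Delta_2+s(1-s)$ termwise; using the automorphy factor and the $\Gamma$-invariance of the delta-distribution at $w$, the right-hand side collapses to the single delta-distribution at $w$ on the fundamental domain. Together with the $L_{2,2}$ decay in $z$ coming from the same estimate used for convergence, this characterizes $G_s(\cdot,w,1)$ as the integral kernel of the self-adjoint resolvent $(\Delta_2+s(1-s))^{-1}$ on $L_{2,2}$, completing the proof.
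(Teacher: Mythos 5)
The paper gives no proof of this proposition at all --- it is quoted from Fay \cite{fayreine} (see also \cite{els}, \cite{patt}, \cite{borth}) --- and your sketch reconstructs precisely the argument of those sources: the $SL(2,\RR)$-invariant free Green's function on $H$ pinned down by the hypergeometric ODE and the diagonal singularity, averaged over $\Gamma$ with the weight-$2$ automorphy factor $j_\gamma(w)$, with convergence for $\Re s>\delta$ coming from $\sigma(z,w)\asymp e^{d(z,w)}$ and the defining property of the exponent of convergence. Your proposal is therefore correct and takes the same route as the cited reference; the only step I would tighten in a written version is the final identification, where one must also know that $s(1-s)$ lies in the resolvent set of $\Delta_2$ on $L_{2,2}$ for the $s$ in question (prove the identity first for $\Re s$ large, then continue analytically), and note in passing that the third parameter of the hypergeometric function in the statement should read $2s$, as your ODE reduction would in fact produce.
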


For the funnel case, we identify $z'$ with the standard
coordinates $({\rho'},t')$ in the funnel $F_j$, and we denote
\begin{equation}\label{fun}
E^f_{j,1}(s,z,t')=\lim_{\rho '\to 0} {\rho'}^{-s}G_s(z,z',1)\, ,
\end{equation}
for $j=1,...,n_f$. In the cusp $C_j$, with standard coordinates
$z'=({\rho'},t')$, we set
\begin{equation}\label{cusp}
E^c_{j,1}(s,z)=\lim_{\rho '\to 0} {\rho'}^{1-s}G_s(z,z',1)\, ,
\end{equation}
for $j=1,...,n_c$.

 Let
 $$P(z,\zeta)=\Im(z)/{|z-\zeta|^2} $$
 where $z\in H$ and $\zeta\in \RR$, be the Poisson kernel. For
$b\in O(\Gamma)={\mathbb R }\cup \{\infty\}-
L({\Gamma})$ define the Eisenstein series (\cite{patt},
\cite{els})
$$E_b(z,s,k)=\sum_{\gamma \in\Gamma} j(\gamma,z)^k P(\gamma(z), b)^s(\gamma(z),b)^k\, ,$$
where $j(\gamma,z)=\gamma'(z)/|\gamma'(z)|$ and
$(z,b)=(\bar{z}-b)/(z-b)$. This converges uniformly on compact subsets of $H$ if $\Re(s)>\delta$.

\begin{prop}
The series $E_b(z,s,k)$ can be continued to the whole complex plane as a meromorphic function in $s$.
\end{prop}

One verifies that
 $-\Delta_{2k} E_b(z,s,k)=s(1-s)E_b(z,s,k)$.

\begin{rem}
Thus if $\delta<1/2$, $E_b(z,s,k)$ is analytic in a neighbourhood of $\Re(s)=1/2$.
\end{rem}
For the standard funnel $F_l$ which corresponds to the region $\Re
z\geq 0$ in the model $C_l=\Gamma_l\backslash H$, we have (see
\cite{fayreine} p.200): \begin{eqnarray*}
E^f_{l,1}(s,z,x')&=&\lim_{z'\to x'}(\Im z')^{-s}G_s(z,z',1)\\
&=&-\frac{4^s}{4\pi}\frac{\Gamma(s+1)\Gamma(s-1)}{\Gamma(2s)}E_{x'}(z,s,1).
\end{eqnarray*}
We write then
$${\mathcal E}_{f_l}(s,z,x')=(1-2s)\frac{E^f_{l,1}(s,z,x')}{y}\, dz .$$

\begin{rem}\label{rappell}
Recall  the definition of the classical Eisenstein series.
The stabilizer of a cusp $\A$ is an infinite cyclic group generated by a parabolic motion,
$$\Gamma_{\A}=\{ \gamma \in \Gamma : \gamma \A=\A\}=\< \gamma_{\A} \>\ ,$$
say. There exists $\sigma_{\A} \in SL_2(\mathbb{R})$, called  a scaling matrix of the cusp $\A$,  such that
$\sigma_{\A}\infty=\A,\ \sigma_{\A}^{-1}\gamma_{\A} \sigma_{\A}=\left( \begin{array}{cc}1&1\\ 0 &1\end{array}\right) $.
$\sigma_{\A}$ is determinated up to composition with a translation from the right. The Eisenstein series for the cusp $\A$ is then defined by:
$$E_{\A}(z,s)=\sum_{\Gamma_{\A}\backslash \Gamma}y(\sigma_{\A}^{-1}\gamma z)^s\ ,$$
where $s$ is a complex variable with $\Re s >\delta$.
\begin{defn}\label{weight}
In a similar way we define an Eisenstein series of weight $2q$ associated to a cusp $\A$  as the automorphic form of weight $2q$, for $\Re s >\delta$:
$$\ E_{\A,q}(s,z)=  \sum_{\gamma \in \Gamma_{\A}\backslash \Gamma}y(\sigma_{\A}^{-1} \gamma z)^{s}j_{\sigma_{\A}^{-1} \gamma}(z)^{-q}
 =  \sum_{\gamma \in \Gamma_{\A}\backslash \Gamma}y(\sigma_{\A}^{-1} \gamma z)^{s} \left(\frac{(\sigma_{\A}^{-1} \gamma)'z}{|(\sigma_{\A}^{-1} \gamma)'z |}\right)^q\ .$$
It can be continued to the whole complex plane as a meromorphic function in $s$.
\end{defn}

We call a horocyclic Eisenstein series the 1-form corresponding to the Eisenstein series of weight 2 associated to a cusp $\A$, 
$\ E_{\A,1}$ and  defined for $\Re s >1$, by:

\begin{equation}\label{eh}
{\mathcal E}_{\A}(s,z)=\sum_{\gamma \in \Gamma_{\A}\backslash \Gamma}y(\sigma_{\A}^{-1} \gamma z)^{s-1} \ d(\sigma_{\A}^{-1} \gamma z)
 =\frac{E_{\A,1}(s,z)}{y}\ dz\ .
\end{equation}
\end{rem}

 We now verify that it corresponds to the defining
formula (\ref{cusp}).\\ For the standard cusp, we write
$$G_s(z,z',1)=\sum_{\Gamma_\infty\backslash \Gamma}\left(\frac{c\bar{z}+d}{cz+d}\right)G_s^{\Gamma_\infty}(\gamma z, z',1)\, ,$$
where $ G_s^{\Gamma_\infty}( z, z',1)$ is the resolvent
kernel of the standard cusp for automorphic forms of weight 2.
We use then \cite{fayreine} p. 155 (38), p.177 for $\Im z'>\Im
\gamma z$, p. 172
(see also \cite{borth} p.72, p.102) to conclude that
$$\lim_{y'\to \infty}{y'}^{s-1}G_s(z,z',1)=\sum_{\Gamma_\infty\backslash \Gamma}\left(\frac{c\bar{z}+d}{cz+d}\right)\frac{(\Im \gamma z)^s}{1-2s}=\frac{1}{1-2s}E_{\infty,1}(s,z)\, .$$

Recall the decomposition (see section 1) $M=K\cup_{j=1}^{n_c}C_j\cup_{j=1}^{n_f} F_j$ and denote ${\mathcal E}_{c_j}$ the horocyclic Eisenstein series associated to the cusp $C_j$.

With the preceding notations we then  have (see for example
\cite{fayreine}, \cite{patt})
\begin{prop}
  For $w=f(z)\,dz$ square
integrable,  \begin{eqnarray*}
w(z)&=&\sum_{i=1}^m (w)_{\lambda_i}(z)+ \frac{1}{4\pi i}
\sum_{j=1}^{n_c}\int_{-\infty}^{+\infty}\<w, {\mathcal
E}_{c_j}(1/2 +it, .)\>{\mathcal E}_{c_j}(1/2 +it, z)\, dt\, +\\
& &\frac{1}{4\pi i}
\sum_{j=1}^{n_f}\int_{-\infty}^{+\infty}\left[
\int_1^{\lambda_f^2}\<w, {\mathcal E}_{f_j}(1/2 +it,
.,b)\>{\mathcal E}_{f_j}(1/2 +it, z,b)\, db\right]\, dt;
\end{eqnarray*}
where the first sum in the right member is the projection of $w$ on the discrete spectrum.
\end{prop}
 
 \begin{rem}
 
  One can easily deduce the formula for an arbitrary
square integrable 1-form 
\begin{eqnarray*}
 & &\Omega(z)=f\, dz +g\, d\bar{z}=\sum_{i=1}^m (\Omega)_{\lambda_i}(z)+\\
  & &\frac{1}{4\pi i}
\sum_{j=1}^{n_c}\int_{-\infty}^{+\infty}\<\Omega, {\mathcal E}_{c_j}(1/2 +it, .)\>{\mathcal E}_{c_j}(1/2 +it, z)\, 
+ \<\Omega, {\mathcal E}_{c_j}(1/2
+it, .)_{-1}\>{\mathcal E}_{c_j}(1/2 +it, z)_{-1}\, dt \\
& &+\frac{1}{4\pi i} \sum_{j=1}^{n_f}\int_{-\infty}^{+\infty}
\int_1^{\lambda_f^2}\<\Omega, {\mathcal E}_{f_j}(1/2 +it,
.,b)\>{\mathcal E}_{f_j}(1/2 +it, z,b)\, db\, dt \\
 & &+\frac{1}{4\pi i} \sum_{j=1}^{n_f}\int_{-\infty}^{+\infty}
\int_1^{\lambda_f^2}
\<\Omega,
{\mathcal E}_{f_j}(1/2 +it, .,b)_{-1}\>{\mathcal
E}_{f_j}(1/2 +it, z,b)_{-1}\, db\, dt 
\end{eqnarray*}
where, with obvious notations, ${\mathcal
E}_{-1}=\overline{\mathcal
E}=\displaystyle\frac{\overline{E}}{y}d\bar{z}$. To simplify we
will write
$$\Omega(z)=
(\Omega)_{\lambda_i}(z)+ \frac{1}{4\pi i}
\int_{-\infty}^{+\infty}\<\Omega, {\mathcal E}_{c_j}(1/2
+it, .)_{\pm}\>{\mathcal E}_{c_j}(1/2 +it, z)_{\pm}\, dt\,
 +$$
 $$\frac{1}{4\pi i}\int_{-\infty}^{+\infty}\left[ \int_1^{\lambda_f^2}\<\Omega,
{\mathcal E}_{f_k}(1/2 +it, .,b)_{\pm}\>{\mathcal
E}_{f_k}(1/2 +it, z,b)_{\pm}\, db\right]\, dt\, .$$
\end{rem}
 



\subsection{Harmonic dual form.}

We are now going  to see
\begin{prop}
The hyperbolic Eisenstein series $\Omega_c$
are square integrable.
\end{prop}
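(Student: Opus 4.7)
The plan is to unfold the defining series and then invoke the pointwise bound on $\Omega$ from Proposition \ref{kudla}. Setting $\omega_0 = \frac{dx_2}{(\cosh x_2)^{s+1}}$, we have $\Omega = \frac{1}{k(s)}\sum_{\Gamma_1\backslash\Gamma}\gamma^*\omega_0$. Since $*\overline{\Omega}$ is $\Gamma$-invariant, for every $\gamma\in\Gamma$ one has $(\gamma^*\omega_0)\wedge *\overline{\Omega} = \gamma^*(\omega_0\wedge *\overline{\Omega})$, so interchanging summation and integration (legitimate by the absolute uniform convergence established in Proposition \ref{kudla}) yields
$$\|\Omega\|^2 \;=\; \frac{1}{2}\int_M \Omega\wedge *\overline{\Omega} \;=\; \frac{1}{2k(s)}\sum_{[\gamma]\in\Gamma_1\backslash\Gamma}\int_{\gamma D}\omega_0\wedge *\overline{\Omega} \;=\; \frac{1}{2k(s)}\int_{\mathcal{D}_1}\omega_0\wedge *\overline{\Omega},$$
where $\mathcal{D}_1=\{1\leq|z|\leq e^l\}$ is the fundamental domain of $\Gamma_1$ used earlier, and the last equality reassembles the translates $\gamma D$ into $\mathcal{D}_1$.

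In Fermi coordinates $(x_1,x_2)$ adapted to $\tilde c$, $\mathcal{D}_1$ becomes the strip $[0,l]\times\mathbb{R}$ with area element $\cosh(x_2)\,dx_1\,dx_2$, and $|\omega_0| = (\cosh x_2)^{-\sigma-1}$ with $\sigma = \Re s$. Combining the pointwise inequality $|\omega_0\wedge *\overline{\Omega}| \leq |\omega_0|\,|\Omega|\,d\mathrm{vol}$ with the uniform bound $|\Omega|\leq M_0$ on $H$ (which holds by $\Gamma$-invariance of $|\Omega|$, itself guaranteed by Proposition \ref{kudla}), we obtain
$$\|\Omega\|^2 \;\leq\; \frac{M_0}{2|k(s)|}\int_0^l dx_1\int_\mathbb{R}\frac{dx_2}{(\cosh x_2)^{\sigma}} \;=\; \frac{M_0\,l}{2|k(s)|}\int_\mathbb{R}\frac{dx_2}{(\cosh x_2)^{\sigma}},$$
which is finite whenever $\sigma>0$, that is, throughout the half-plane where $\Omega(s,z)$ is defined.

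The only delicate point is the unfolding step, controlled by the absolutely convergent majorant from Proposition \ref{kudla}; after that the estimate is a routine computation in Fermi coordinates. It is worth emphasising that the geometry of the cusps and funnels of $M$ plays no explicit role here: once unfolded, the entire estimate is supported on a Fermi strip of finite width along $\tilde c$, and the exponential decay of $\omega_0$ in the transverse coordinate $x_2$ does all the work. Consequently the argument goes through with no extra effort in the infinite-volume setting, giving $L^2$ integrability on exactly the same half-plane $\Re s>0$ as in the finite-volume case.
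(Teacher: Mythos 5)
Your proof is correct and follows essentially the same route as the paper's: both arguments unfold the sum over $\Gamma_1\backslash\Gamma$ onto the Fermi strip $\mathcal{D}_1$, control the remaining factor by the uniform bound on $\Omega$ (resp.\ on the majorant $\eta(z)=\sum(\cosh x_2(\gamma z))^{-\sigma-1}$) supplied by Proposition \ref{kudla}, and reduce to the convergent integral $\int_{\mathbb{R}}(\cosh x_2)^{-\sigma}\,dx_2$ over a strip of finite width. The only (harmless) differences are cosmetic: you unfold $\Omega\wedge *\overline{\Omega}$ directly rather than squaring the scalar majorant and unfolding one factor, and the interchange of sum and integral is ultimately justified by Tonelli applied to the absolute values (which your final finite estimate provides), not merely by uniform convergence on compacta.
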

 \begin{proof}
We consider a fundamental domain $D$ contained in $\{z, 1\leq
|z|\leq e^l\}$ in which the segment $(i, ie^l)$ represents the
geodesic $c$. We denote $C_\lambda=\{z\in D, d(z,c)=\lambda\}$ and
$F_\lambda=\{z\in D, d(z,c)\geq\lambda\}$. Without loss of
generality we can suppose that there is only one funnel on $M$
and no cusps. Let $V_\lambda$ the volume of $F_\lambda -
F_{\lambda +1}$ there exists a constant $c_1$ such that
$V_\lambda\geq c_1(\sinh(\lambda+1) - \sinh(\lambda))$. For $\Re s=\sigma>0$,
$||\Omega_c(s,z)|| =||\Omega(s,z)|| \leq\frac{1}{|k(s)|}\sum_{\Gamma_1 \backslash \Gamma }
\frac{1}{(\cosh x_2(\gamma z))^{\sigma+1}}$.
Let $\eta(z)= \sum_{\Gamma_1 \backslash \Gamma }
\frac{1}{(\cosh x_2(\gamma z))^{\sigma+1}}$. We know from the section 2 that there exists a constant $K>0$ such that $\forall z\in H$, $\eta(z)\leq  K$.\\
We have \begin{eqnarray*}  \int_D ||\Omega_c(s,z)||^2 d\mu(z)&
\leq& \frac{1}{|k(s)|^2}\int_D \eta^2(z)d\mu(z)\\ &\leq &
\frac{1}{|k(s)|^2}\int_{1\leq x_1 \leq e^l\, ,-\infty
<x_2<+\infty} \eta(z)\frac{1}{(\cosh x_2(z))^{\sigma+1}}\cosh
x_2 \, dx_1\,dx_2\\   &\leq & \frac{K}{|k(s)|^2}\int_{1\leq x_1
\leq e^l\, ,-\infty <x_2<+\infty} \frac{1}{(\cosh
x_2(z))^{\sigma+1}}\cosh x_2 \, dx_1\,dx_2\,  .\end{eqnarray*}
The last integral is $
\frac{\Gamma\left(
\frac{1}{2}\right)\Gamma\left(
\frac{\sigma}{2}\right)}{\Gamma\left(\frac{1}{2}+
\frac{\sigma}{2}\right)}(e^l-1)$ and the result follows.
\end{proof}
 
As in \cite{kmi} we verify:
$$\Delta(\Omega(s,z)) +s(s+1)\Omega(s,z)=s(s+1)\Omega(s+2,z)\, .$$
This formula has a consequence that, for fixed $s$ with $\Re s>0$, the function
$\Delta^k(\Omega(s,z))$ is again square integrable for any $k>0$.\\
Set $\Re s >0$, with our convention of notations
$$\Omega(s,z)=\Omega_0(z) + a_i(s)\phi_i(z) +
\frac{1}{4\pi i}
\int_{-\infty}^{+\infty}h^c_{\pm}(s,t) {\mathcal E}_c(1/2 +it, z)_{\pm}\, dt $$
\begin{equation}\label{star}
 + \frac{1}{4\pi i}
\int_{-\infty}^{+\infty}\left[ \int_1^{\lambda_f^2}H^f_{\pm}(s,t,b){\mathcal E}_f(1/2 +it, z,b)_{\pm}\, db\right]\, dt \,  . 
\end{equation}
 We obtain
 $$H(s,t,b)[1/4 +t^2+s(s+1)]=s(s+1)H(s+2,t,b)\, ,$$
 where $H$ corresponds to any $H^f_{\pm} $.
 From this we get a continuation of $H$ to the region $\Re
s>-1/2$ and we note that  for all $t$ and all $b$ we have
$H(0,t,b)=0$.\\  Moreover for $\Re s>-1/2$, $\Re(s+2)>0$ and we
may substitute in (\ref{star}) to obtain a continuation  of
$\Omega(s,z)$ to $\Re s> -1/2$. Thus we have proved the
following theorem
\begin{thm}\label{dual}  $\Omega(s,z)$ has a meromorphic
continuation to $\Re s>-1/2$ with $s=0$ a regular  point and
$\Omega(0,z)$ is a harmonic form which is  dual to $c$.
\end{thm}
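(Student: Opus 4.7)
My plan is to project the operator identity $\Delta\Omega(s,z)+s(s+1)\Omega(s,z)=s(s+1)\Omega(s+2,z)$ onto each component of the spectral decomposition (\ref{star}), extending to the cusp and discrete contributions the calculation the paper has already carried out for the funnel coefficient $H^f$. Since each $\phi_i$ is a $\Delta$-eigenform with positive eigenvalue $\lambda_i$ and each Eisenstein form is a generalized eigenform with eigenvalue $1/4+t^2$, this produces the companion functional equations
\begin{eqnarray*}
a_i(s)[\lambda_i+s(s+1)]&=&s(s+1)a_i(s+2),\\
h^c_{\pm}(s,t)[1/4+t^2+s(s+1)]&=&s(s+1)h^c_{\pm}(s+2,t),\\
H^f_{\pm}(s,t,b)[1/4+t^2+s(s+1)]&=&s(s+1)H^f_{\pm}(s+2,t,b).
\end{eqnarray*}
Each identity rewrites the coefficient at $s$ as a rational factor times its value at $s+2$, and since the latter is defined on $\Re s>-1/2$, this yields a meromorphic continuation of every coefficient, hence of $\Omega(s,z)$ through (\ref{star}), to the strip $\Re s>-1/2$.

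I would then check that $s=0$ is a regular point. The only possible poles of the continuation come from zeros of the denominators $\lambda_i+s(s+1)$ and $1/4+t^2+s(s+1)$; at $s=0$ these equal $\lambda_i>0$ and $1/4+t^2>0$ respectively, so no pole occurs. Moreover the numerator factor $s(s+1)$ vanishes at $s=0$, forcing $a_i(0)=h^c_{\pm}(0,t)=H^f_{\pm}(0,t,b)=0$. Substituting back into (\ref{star}) leaves only the projection onto the $L^2$-harmonic part, so $\Omega(0,z)=\Omega_0(z)$ is harmonic; equivalently, evaluating $\Delta\Omega(s,z)+s(s+1)\Omega(s,z)=s(s+1)\Omega(s+2,z)$ at $s=0$ gives $\Delta\Omega(0,z)=0$ directly.

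For the duality, Proposition \ref{kudla} already supplies $\int_M \omega\wedge\Omega(s,z)=\int_c \omega$ for every closed compactly supported 1-form $\omega$ and every $s$ with $\Re s>0$. The right-hand side is constant in $s$ while the left-hand side is analytic on $\Re s>0$ and continues meromorphically to $\Re s>-1/2$ by the previous step. By the identity principle the relation persists at $s=0$, giving the duality of $\Omega(0,z)$. The main obstacle I expect is the rigorous justification of the spectral projection step: one must verify that the coefficients $a_i(s)$, $h^c_\pm(s,t)$, $H^f_\pm(s,t,b)$ decay fast enough in the spectral parameters so that the reconstructed series (\ref{star}) converges in the strip $\Re s > -1/2$. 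This decay is controlled by the observation recorded just before the theorem, namely that $\Delta^k\Omega(s,z)$ lies in $L^2$ for every $k\geq 0$.
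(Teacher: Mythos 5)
Your proposal follows essentially the same route as the paper: derive the functional equation $H(s,t,b)[1/4+t^2+s(s+1)]=s(s+1)H(s+2,t,b)$ (and its analogues for the discrete and cusp coefficients) from $\Delta\Omega(s,z)+s(s+1)\Omega(s,z)=s(s+1)\Omega(s+2,z)$, continue the coefficients to $\Re s>-1/2$, observe that they all vanish at $s=0$ so only the harmonic projection survives, and transport the duality relation by analytic continuation, with the $L^2$-bound on $\Delta^k\Omega$ controlling the reassembly of the spectral integral. This is exactly the argument the paper gives (it writes out only the funnel coefficient explicitly, but intends the same for the other components), so the proposal is correct and matches the paper's proof.
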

\begin{rem}
1) Another way to see this:\\
 write $\Omega(s,z)=  (\Delta+s(s+1))^{-1}(s(s+1)\Omega(s+2,z)) $
 and use the meromorphic continuation of the resolvent (see
for example \cite{borth}, \cite{ris}).\\ 2) With an analogue
study of \cite{kmi} (see  also \cite{kramer}) we can obtain a
total description of the singularities of the hyperbolic Eisenstein series.  \end{rem}
 
    \section{ The case of an infinite geodesic joining two points.}
  Without loss of generality we suppose the two cusps $p$ and $q$
to be 0 and $\infty$ respectively and, as the lift of the
geodesic, we take the imaginary axis.  Let $\eta$ be the
infinite geodesic $]p,q[$,    can we do the same construction as
Kudla and Millson?    As in the finite volume case, the problem
reduces to study the following series for $\Re s>1$:
\begin{equation}\label{infinity}
 \hat{\eta}^s(z)= \frac{1}{k(s-1)}\sum_{\gamma \in \Gamma}\gamma* \left[\left(
\frac{y}{|z|}\right)^{s-1}\Im(z^{-1}dz)\right]= \Im (\theta^s(z))\ ,
\end{equation}
where

$$\theta^s(z)=\frac{1}{k(s-1)}\sum_{\gamma \in \Gamma}\gamma* \left[\left(
\frac{y}{|z|}\right)^{s-1}\frac{dz}{z}\right]\ ,$$
 and $k(s-1)=\displaystyle \frac{\Gamma(1/2)\Gamma(s/2)}{\Gamma(1/2+s/2)}$.
 \subsection{Some useful estimates.}
 
 As usual we can suppose $\Gamma_\infty=\< z\mapsto z+1 \>$ to be
the stabilizer of $\infty$ in $\Gamma$ and the stabilizer of $0$,
 $\Gamma_0$ is then generated by $z \mapsto \frac{z}{-c_0^2 z+1}$ (for
some non zero constant  $c_0$).\\ First of all we note that,
contrary to the finite volume case, we have 
\begin{lemma}
The series
$\sum_{\gamma\in
\Gamma_\infty\backslash\Gamma}  \Im(\gamma z)$  is convergent (see
proposition \ref{noyau} and formula \ref{cusp}).
\end{lemma}
Another way to see this is ``by hand":
 We know
that for $\Re s>\delta$, $\sum_{T\in\Gamma}e^{-sd(i,Tz)}$
converges, moreover there exists a constant $C>0$ such that
$\sum_{\gamma\in \Gamma_\infty\backslash\Gamma}  \Im^s(\gamma
z)\leq C \sum_{T\in\Gamma}e^{-sd(i,Tz)}$, as in our case
$\delta<1$, we have the result.\\ As $\sum_{\gamma\in
\Gamma_\infty\backslash\Gamma}  \left|\Im^s(\gamma
z)\frac{c\bar{z}+d}{cz+d}\right|=\sum_{\gamma\in
\Gamma_\infty\backslash\Gamma}  \Im^{\Re s}(\gamma z)$ we also
deduce the convergence of the series representing $E_{\infty,1}(1,z)$ (\ref{rappell}).
 With the notations of the recall 3.1 and $\rho$ the standard coordinate for the cusp $\A$, we re-write the results of
\cite{borth}(p.110) in the following way: $E_{\A}(s,.)=\rho^{-s}(1-\chi_0(\rho))+0(\rho_f^s \rho_c^{s-1})$, $\rho$ is decomposed as $\rho_f \rho_c$ with $\rho_f=\rho$ in the funnels and $\rho_c=\rho$ in the cusps and we define $\chi_0\in {\mathcal C}_0^\infty(X)$ such that 
$$\chi_0=\left\{ \begin{array}{cc}1,& r\leq 0\\0,& r\geq 1\end{array}\right ..$$
\begin{lemma}\label{prop1} We have the following asymptotic behaviors for $ \Re
s>\delta$:\\ 
 1) in a  funnel, for all cusp $\A$,
$E_{\A}(s,z)$ is square integrable;\\ 2) at $\A=\infty$,
$E_\infty(s,z)- y^s=O(y^{1-s})$ and $E_0(s,z)=O(y^{1-s})$;\\  3) near $\A=0$,
$E_0(s,z)-y^s/(c_0^2|z|^2)^s=O(y^{1-s}/(c_0^2|z|^2)^{s-1})$ and $E_\infty(s,z)=O(y^{1-s}/(c_0^2|z|^2)^{s-1})$.
\end{lemma}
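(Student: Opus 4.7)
The entire statement should fall out of the asymptotic
\[
E_\A(s,z) = \rho^{-s}\bigl(1-\chi_0(\rho)\bigr) + O\bigl(\rho_f^{s}\rho_c^{s-1}\bigr),\qquad \Re s>\delta,
\]
just quoted from \cite{borth}. In that formula, $\rho$ denotes the standard coordinate at the particular cusp $\A$, i.e.\ the pullback of $1/y$ by $\sigma_\A^{-1}$, whereas $\rho_f$ and $\rho_c$ are the global boundary-defining functions that vanish at funnel infinity and at the cusps, respectively. The plan is to identify $\rho$ explicitly for $\A=\infty$ and $\A=0$, read off the leading and error terms in each end, and handle the funnel integrability in 1) by a one-variable integral.

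For $\A=\infty$ the scaling matrix is the identity, so $\rho=1/y$ and $\rho^{-s}=y^s$; in that cusp $\rho_f\equiv 1$, $\rho_c=1/y$, and $1-\chi_0(\rho)=1$ locally, so the error becomes $O(y^{1-s})$ and one reads off $E_\infty(s,z)-y^s=O(y^{1-s})$, which is the first half of 2). For $\A=0$ I take $\sigma_0=\begin{pmatrix}0 & -1/c_0\\ c_0 & 0\end{pmatrix}$, which conjugates $z\mapsto z+1$ to the generator $z\mapsto z/(-c_0^2 z+1)$ of $\Gamma_0$; since $\sigma_0^{-1}(z)=-1/(c_0^2 z)$, one has $y(\sigma_0^{-1}z)=y/(c_0^2|z|^2)$, hence $\rho=c_0^2|z|^2/y$ near $0$. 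Substituting yields the leading term $y^s/(c_0^2|z|^2)^s$ and the error $O\bigl(y^{1-s}/(c_0^2|z|^2)^{s-1}\bigr)$, i.e.\ the first half of 3). The two ``cross'' statements---$E_0(s,z)=O(y^{1-s})$ near $\infty$ and $E_\infty(s,z)=O(y^{1-s}/(c_0^2|z|^2)^{s-1})$ near $0$---follow because the leading singularity $\rho^{-s}(1-\chi_0(\rho))$ of $E_\A$ lives in the cusp neighborhood of $\A$ alone: at every other cusp $\chi_0\equiv 1$ locally, so the leading term vanishes there and only the stated error survives.

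For 1), in any funnel we have $\rho_c\equiv 1$ while $\rho_f$ ranges over $(0,2]$, and the leading term vanishes identically because the funnel is disjoint from every cusp neighborhood; hence $E_\A(s,z)=O(\rho_f^{s})$ uniformly there. Using the funnel volume element $d\mu\sim l\,\rho_f^{-2}\,d\rho_f\,dt$ near $\rho_f=0$, the $L^2$-norm over the funnel is controlled by $\int_0^\varepsilon \rho_f^{2\Re s-2}\,d\rho_f$, which is finite because $\Re s>\delta>1/2$---the inequality $\delta>1/2$ being guaranteed by the presence of parabolic elements, as recalled in \S3.1. The main obstacle is purely notational: one must distinguish the two uses of $\rho$ in the expansion (the local cusp coordinate in the leading term versus the global factors $\rho_f,\rho_c$ in the remainder) and carry out the conjugation by $\sigma_0$ carefully so as to recognize the factor $c_0^2|z|^2$. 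Once that bookkeeping is done, the lemma reduces to a substitution into the Borthwick expansion together with this one-variable integrability check.
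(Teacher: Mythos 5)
Your proof is correct and is essentially the paper's own (implicit) argument: the lemma is stated immediately after quoting the Borthwick expansion $E_{\A}(s,\cdot)=\rho^{-s}(1-\chi_0(\rho))+O(\rho_f^s\rho_c^{s-1})$ with no further proof, so your identification of the cusp coordinate $\rho$ at $\infty$ and at $0$ via $\sigma_0$, the observation that the leading term is supported only in the cusp neighborhood of $\A$, and the one-variable integrability check $\int_0^\varepsilon \rho_f^{2\Re s-2}\,d\rho_f<\infty$ for $\Re s>\delta>1/2$ is exactly what is intended. One small caveat: a literal substitution of $\rho=c_0^2|z|^2/y$ gives the error term $(y/(c_0^2|z|^2))^{1-s}=y^{1-s}(c_0^2|z|^2)^{s-1}$, which is the reciprocal in the $(c_0^2|z|^2)$ factor of what the lemma (and your last sentence of part 3) prints --- this appears to be a typo originating in the paper, not a flaw in your method.
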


\subsection{Convergence of the Hyperbolic  Eisenstein series and
analytic continuation.}

The computations to prove the convergence of
(\ref{infinity}) are easily adapted from the finite volume case. For the
convenience of the reader we recall the essential points.\\  We
have $|| \sum_{\gamma \in
\Gamma}\gamma*\left(\frac{y}{|z|}\right)^{s-1}\Im(z^{-1}dz)||\leq
\sum_{\gamma \in \Gamma}\left( \frac{y}{|z|}\right) ^{\sigma}
(\gamma z)~~~ ,$ where $\sigma = \Re s >1$ and if we denote by
${\mathcal S}= \displaystyle \sum_{\gamma \in \Gamma}\left(
\frac{y}{|z|}\right) ^{\sigma} (\gamma z)$, we have
$${\mathcal S}=\sum_{\gamma \in \Gamma_\infty \backslash
\Gamma}y^{\sigma}(\gamma z) \sum_{n \in \mathbb{Z}}\frac{1}{|\gamma z
+n|^{\sigma}}~~~.$$
Let $S_z$ be a system of representatives of
$\Gamma_{\infty}\backslash \Gamma$ such that $|\Re \gamma z| \leq 1/2$, then
$$||{\mathcal S}||\leq \sum_{\gamma \in S_z}\frac{y^\sigma(\gamma z)}{|\gamma
z|^\sigma} + 2\sum_{\gamma \in S_z}y^\sigma(\gamma z) \sum_{n=1}^\infty
\frac{1}{(n-1/2)^\sigma}\, .$$

We have
\begin{eqnarray*}
\sum_{\gamma \in S_z}\frac{y^\sigma(\gamma z)}{|\gamma
z|^\sigma}&=&\sum_{\Gamma_0\backslash S_z}\frac{y^\sigma(\gamma z)}{|\gamma
z|^\sigma}\sum_{n\in \mathbb{Z}}\frac{1}{|-nc_0^2 \gamma z+1|^\sigma}\\
  &=& \sum_{\gamma \in \Gamma_0 \backslash S_z}
\frac{y^\sigma(\gamma z)}{|\gamma
z|^\sigma}+ \sum_{\gamma \in \Gamma_0\backslash S_z}
\frac{y^\sigma(\gamma z)}{|\gamma z|^\sigma}\sum_{n\in \mathbb{Z}^*}\frac{1}{|nc_0^2|^\sigma[(x(\gamma z)-1/nc_0^2)^2 +y^2( \gamma z)]^{\sigma/2}}.
\end{eqnarray*}

For $K$ a compact set in $ H$ there exists  $m$ in
$ H$ such that
 $$\forall z \in K,\, \forall \gamma \in \Gamma_0\backslash
S_z,\,|\gamma z| \geq |m|\,\, \mbox{and}\,\, \Im \gamma z\geq \Im
m\, .$$
So
 \begin{eqnarray*} \sum_{\gamma
\in S_z} \frac{y^\sigma (\gamma z)}{|\gamma z|^\sigma }&\leq&
\sum_{\gamma \in \Gamma_0 \backslash S_z} \frac{y^\sigma (\gamma
z)}{|m|^\sigma } +\sum_{\gamma \in \Gamma_0 \backslash
S_z}\frac{y^\sigma(\gamma z) }{|m|^\sigma}\sum_{n\in
\mathbb{Z}^*}\frac{1}{|nc_0^2|^\sigma (\Im m)^\sigma }\\  &\leq&
\frac{1}{|m|^\sigma }\sum_{\Gamma_\infty \backslash
\Gamma}y^\sigma (\gamma z)+ 2\sum_{n\in
\mathbb{N}^*}\frac{1}{(nc_0^2)^\sigma       \bigskip
}\frac{1}{|m|^\sigma (\Im m )^\sigma  }\sum_{\Gamma_\infty
\backslash \Gamma}y^\sigma(\gamma z)~~~; \end{eqnarray*}
and finally, for all $z$ in $K$
 $$\aligned
||\mathcal{S}|| \leq &\ \frac{1}{|m|^\sigma }\sum_{\Gamma_\infty \backslash
\Gamma}y^\sigma(\gamma z)+ 2\sum_{n\in \mathbb{N}^*}\frac{1}{(nc_0^2)^\sigma}
\frac{1}{|m|^\sigma (\Im m )^\sigma  }\sum_{\Gamma_\infty \backslash
\Gamma}y^\sigma(\gamma z) +\\
& \ 2\sum_{\Gamma_\infty \backslash \Gamma} y^\sigma
(\gamma z) \sum_{n=1}^\infty \frac{1}{(n-1/2)^\sigma},
\endaligned$$
the uniform convergence on all compact of $H$ and all compact of
 $\Re s > 1$.

 From this ultimate inequality we conclude that $\theta^s$ is square integrable in the funnels as
 the Eisenstein series $\mathcal E_\infty$.
To conclude, we have the following theorem:
\begin{thm}\label{3.1}
For $\Re s>1$, the Eisenstein series associated to the geodesic
$\eta = (p,q)$ converges uniformly on all compact sets. It represents a ${\mathcal C}^\infty$ closed
form which is  dual to
 $\eta$. For $\Re s>1$ it satisfies the differential functional equation:

$$\Delta \hat{\eta}^s=s(1-s)[\hat{\eta}^s-\hat{\eta}^{s+2}].$$
\end{thm}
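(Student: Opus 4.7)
The plan is to establish the three assertions in turn, all exploiting the same description of the basic $1$-form. For the convergence clause, the estimate on $\|\mathcal{S}\|$ derived just above the theorem already provides the uniform bound on compact subsets of $H$ and of $\{\Re s>1\}$; dividing by $k(s-1)$, which is holomorphic and non-vanishing in that half-plane, transfers this uniform convergence to $\hat{\eta}^s$ itself. The $\mathcal{C}^\infty$ property follows by applying the same style of estimate to termwise derivatives of the basic form: $\partial_x$ and $\partial_y$ of $(y/|z|)^{s-1}\, z^{-1}\, dz$ are linear combinations of forms of the same shape with shifted exponents in $s$, so the derivative series converge by comparison in the same half-plane.

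For closedness, I note that in polar coordinates $(r,\theta)$ on $H$ one has $(y/|z|)^{s-1}\Im(z^{-1}dz)=\sin^{s-1}\theta\, d\theta=dF^s(\theta)$, where $F^s(\theta)=\int_0^\theta \sin^{s-1}u\, du$. Each $\gamma$-translate is thus locally exact, hence closed, and closedness is preserved under the uniformly convergent sum. To prove duality to $\eta$, I would run an unfolding argument: for a closed, compactly supported test $1$-form $\omega$ on $M$ with $\Gamma$-invariant lift $\tilde\omega$ on $H$, rewrite
$$\int_M \omega\wedge\hat{\eta}^s=\frac{1}{k(s-1)}\int_H \tilde\omega\wedge dF^s(\theta),$$
integrate by parts in $\theta$ along the horizontal slices, and use the boundary values $F^s(0)=0$ and $F^s(\pi)=k(s-1)$ to recover $\int_\eta \omega$; the compact support of $\tilde\omega$ modulo $\Gamma$ and the decay of $\sin^{s-1}\theta$ near $\theta\in\{0,\pi\}$ justify the exchange of sum and integral for $\Re s>1$.

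The differential functional equation is a purely local computation. By $\Gamma$-invariance of $\Delta$, the claim reduces to the corresponding identity for a single summand. In the Fermi coordinates $(x_1,x_2)$ along the imaginary axis, where $\sin\theta=1/\cosh x_2$ and $d\theta=-(1/\cosh x_2)\, dx_2$, the basic form becomes $-\beta_s$ with $\beta_s:=(\cosh x_2)^{-s}\, dx_2$. Since $d\beta_s=0$, only $d\delta\beta_s$ contributes to $\Delta\beta_s$; using the orthonormal coframe $e^1=\cosh x_2\, dx_1,\; e^2=dx_2$ with $\ast e^1=e^2$ and $\ast e^2=-e^1$, a short direct calculation gives
$$\Delta\beta_s=(s^2-1)\beta_{s+2}-s(s-1)\beta_s.$$
Dividing by $-k(s-1)$ and using the gamma-function identity $k(s+1)/k(s-1)=s/(s+1)$ to rewrite $\beta_{s+2}/k(s-1)$ in terms of $\hat{\eta}^{s+2}$, the two terms reassemble into $s(1-s)[\hat{\eta}^s-\hat{\eta}^{s+2}]$.

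The main obstacle I expect is the duality clause: because $\eta$ is infinite and the primitive $F^s(\theta)$ is not compactly supported on a fundamental domain, I must carefully verify that the accumulated contributions at the two cuspidal endpoints of $\eta$ combine as an intersection number. The normalization $F^s(\pi)-F^s(0)=k(s-1)$, which is precisely the denominator in the definition of $\hat{\eta}^s$, is exactly what forces the resulting identity to be $s$-independent and thus to coincide with the topological definition of duality. The termwise application of $\Delta$ in the third paragraph and the interchange of summation with integration in the second both rest on the uniform convergence established in the first.
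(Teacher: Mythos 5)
Your proposal is correct and follows essentially the same route as the paper: the convergence clause rests on exactly the estimate for $\mathcal{S}$ that the paper derives (folding the $\Gamma$-sum through $\Gamma_\infty\backslash\Gamma$), and the remaining clauses — closedness via the primitive $F^s(\theta)$ with $F^s(\pi)-F^s(0)=k(s-1)$, duality by unfolding, and the functional equation by the local computation $\Delta\beta_s=(s^2-1)\beta_{s+2}-s(s-1)\beta_s$ combined with $k(s+1)/k(s-1)=s/(s+1)$ — are precisely the Kudla--Millson arguments that the paper invokes by reference to the finite-volume case and to \cite{mpi} rather than writing out. Your computations check out; you in fact supply more detail than the paper does for this theorem.
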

 
 Now we want to prove the analytic continuation of $\hat{\eta}^s$
at $s=1$. For this, first of all, we are going to show that
$\theta^s(z)-1/i({\mathcal E}_\infty(1,z)-{\mathcal E}_0(1,z))$ is
square integrable. As we have shown that $\theta^s$ is square integrable in the funnels, what we have to do is to investigate the
Fourier expansion  of $\theta^s$, at each inequivalent cusps:
$0$ and $\infty$,  and to show that $ ||\theta^s||$ is
bounded at the cusps. As in the case of  finite volume case we have (\cite{mpi}):

\begin{prop}\label{prop2}
At $\infty$
$$\theta^s(z)=(\frac{1}{i} +O(1/y))\ dz\ ,$$
and at $0$
$$\theta^s(z)=(-\frac{1}{i c_0^2 z^2} +O(1/y))\ dz\ .$$
\end{prop}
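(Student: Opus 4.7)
The plan is to compute the Fourier expansion of $\theta^s$ at each inequivalent cusp and isolate the constant-in-$x$ term. At the cusp $\infty$, the $\Gamma$-invariance of $\theta^s$ together with $z\mapsto z+1\in\Gamma_\infty$ makes the coefficient $\Theta(z)$ of $dz$ in $\theta^s=\Theta(z)\,dz$ periodic of period $1$ in $x=\Re z$, so $\Theta(z)=\sum_{k\in\ZZ}a_k(y)\,e^{2\pi i k x}$. It suffices to prove $a_0(y)=1/i+O(1/y)$ and that the non-constant modes contribute $O(1/y)$ uniformly in $x$; I split the series defining $\theta^s$ along the cosets $[\bar\gamma]\in\Gamma_\infty\backslash\Gamma$ and treat the identity coset and the rest separately.

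For the identity coset, the standard unfolding $\int_0^1\sum_n f(x+n+iy)\,dx=\int_\RR f(t+iy)\,dt$ reduces the contribution to $a_0(y)$ to
$$\frac{1}{k(s-1)}\int_{\RR}\left(\frac{y}{\sqrt{t^2+y^2}}\right)^{s-1}\frac{dt}{t+iy}.$$
The substitution $t=yu$ makes the integral $y$-independent; splitting $\frac{1}{u+i}=\frac{u-i}{u^2+1}$ and exploiting the oddness of $u/(u^2+1)^{(s+1)/2}$ reduces it to the beta-function integral $-i\,\Gamma(1/2)\Gamma(s/2)/\Gamma((s+1)/2)=-i\,k(s-1)$, so division by $k(s-1)$ gives the desired leading coefficient $-i=1/i$. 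The non-constant Fourier modes from this coset equal $\widehat F(ky)$ for $F(v)=(v^2+1)^{-(s-1)/2}/(v+i)$; since $F$ and all its derivatives are integrable for $\Re s>1$, repeated integration by parts gives $\widehat F(ky)=O((ky)^{-N})$ for any $N$, whose sum over $k\neq 0$ is $O(y^{-N})$.

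For the cosets with $c\neq 0$, a direct calculation using $|cz+d|^2=y/\Im\gamma z$ and $|az+b|=|\gamma z||cz+d|$ gives the pointwise identity $|[\gamma^*\omega]_{dz}|=(\Im\gamma z)^s/(y|\gamma z|^s)$. Since $0\notin\Gamma\cdot\infty$, $a\neq 0$ for all such $\gamma$ and $\gamma z$ stays away from $0$ as $y\to\infty$. Combined with the summability of $\sum_n 1/|\bar\gamma z+n|^s$ on each coset, this estimate can be summed using the Eisenstein bound $E_\infty(s,z)-y^s=O(y^{1-s})$ of Lemma~\ref{prop1}(2), exactly as in the finite-volume treatment of \cite{mpi}, to obtain the required $O(1/y)$ contribution uniformly in $x$.

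For the cusp $0$, I apply the scaling matrix $\sigma_0(w)=-1/(c_0^2 w)$, which sends $\infty\to 0$ and conjugates $\Gamma_0$ to $\Gamma_\infty$. A direct check using $y(\sigma_0 w)/|\sigma_0 w|=\Im w/|w|$ and $\sigma_0^*(dz/z)=-dw/w$ shows that $\sigma_0^*\theta^s$ is, up to sign, the analogous series for $\sigma_0^{-1}\Gamma\sigma_0$ at its cusp $\infty$. Applying the preceding analysis yields $\sigma_0^*\theta^s(w)=(-1/i+O(1/\Im w))\,dw$, and the change of variable $dz=dw/(c_0^2 w^2)$ with $c_0^2 z^2=1/(c_0^2 w^2)$ produces $\theta^s(z)=(-1/(ic_0^2 z^2)+O(1/y))\,dz$ near $z=0$. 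The main obstacle is the uniform $O(1/y)$ bound on the non-identity cosets: combining pointwise decay with an absolutely convergent majorant requires both the Eisenstein-series estimate and the crucial separation of $0$ and $\infty$ into distinct $\Gamma$-orbits.
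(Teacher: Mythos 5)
Your proposal follows the same route the paper itself takes: the paper offers no argument beyond ``the proof is identical in the finite volume case (see \cite{mpi})'', and what you write out is precisely that finite-volume argument — Fourier expansion at each cusp, unfolding of the identity coset of $\Gamma_\infty\backslash\Gamma$, Eisenstein-type majorization of the remaining cosets, and transport to the cusp $0$ by the scaling matrix $\sigma_0(w)=-1/(c_0^2w)$. Your explicit computations are correct: the unfolded constant term is $\int_{\RR}(1+u^2)^{-(s+1)/2}(u-i)\,du=-i\,\Gamma(1/2)\Gamma(s/2)/\Gamma((s+1)/2)=-i\,k(s-1)$, which after the normalization by $k(s-1)$ gives the leading coefficient $1/i$; the rapid decay of the nonzero Fourier modes of the identity coset is right; and the conjugation identities $y(\sigma_0w)/|\sigma_0w|=\Im w/|w|$ and $\sigma_0^*(dz/z)=-dw/w$ do produce the stated expansion at $0$, including the pleasant cancellation that converts the error $O(1/\Im w)\,dw$ into $O(1/y)\,dz$. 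The one place to be careful is your treatment of the cosets with $c\neq 0$: the assertion that ``$\gamma z$ stays away from $0$ as $y\to\infty$'' is true for each fixed $\gamma$ but is not uniform in $\gamma$, since the orbit $\Gamma\infty$ accumulates on the whole limit set and in particular at the parabolic point $0$; hence $|\gamma z|$ has no uniform lower bound, and the bound $E_\infty(\sigma,z)-y^\sigma=O(y^{1-\sigma})$ by itself does not close the estimate. One needs the finer argument (e.g.\ via $\Im w/|w|=1/\cosh d(w,\tilde{\eta})$ with $\tilde{\eta}$ the imaginary axis, together with a counting estimate for the translates of $\tilde{\eta}$, as in \cite{kmi} and \cite{mpi}) — which is exactly the step that both you and the paper delegate to \cite{mpi}, so this is a point to acknowledge explicitly rather than a flaw peculiar to your write-up.
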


By proposition \ref{prop1} and lemma \ref{prop2}, we conclude:

\begin{prop}\label{prop3}
The 1-forms $\theta^s(z)-1/i({\mathcal E}_\infty(1,z)-{\mathcal E}_0(1,z))$ and  $\hat{\eta}^s(z)+\Re({\mathcal E}_\infty(1,z)-{\mathcal E}_0(1,z))$ are square integrable.
\end{prop}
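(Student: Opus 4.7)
\emph{Proof plan.} The approach is to verify square integrability piece by piece on the decomposition $M = K \cup C_\infty \cup C_0 \cup F$ of the surface into the compact core, the two cusps at $\infty$ and $0$, and the funnels. Once the first form is shown to be $L^2$, the second follows automatically: writing $\tfrac{1}{i} = -i$ and taking imaginary parts gives
\[
\Im\!\left(\theta^s - \tfrac{1}{i}(\mathcal{E}_\infty(1,\cdot) - \mathcal{E}_0(1,\cdot))\right) = \hat{\eta}^s + \Re(\mathcal{E}_\infty(1,\cdot) - \mathcal{E}_0(1,\cdot)),
\]
and the imaginary part of a complex $L^2$ 1-form is itself $L^2$.

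On the compact core $K$ all the forms are smooth, so $L^2$ on $K$ is automatic. In the funnels, $\theta^s$ was observed (just before Theorem \ref{3.1}) to be square integrable, and both $\mathcal{E}_\infty(1,\cdot)$ and $\mathcal{E}_0(1,\cdot)$ are $L^2$ on $F$ by Lemma \ref{prop1}(1), applied at $s = 1 > \delta$; one passes from the statements there to the weight-two forms via the relation $\mathcal{E}_\mathcal{A}(s,z) = (E_{\mathcal{A},1}(s,z)/y)\,dz$ recalled in \ref{rappell}, which preserves $L^2$ boundedness in the funnel. The combination is therefore $L^2$ on $F$.

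The real content is in the cusps. At the cusp $\infty$ (coordinates $y\ge 1$, $0\le x\le 1$), Proposition \ref{prop2} gives $\theta^s(z) = \bigl(\tfrac{1}{i} + O(1/y)\bigr)dz$, while Lemma \ref{prop1}(2) yields $\mathcal{E}_\infty(1,z) = \bigl(1 + O(1/y)\bigr)dz$ and $\mathcal{E}_0(1,z) = O(1/y)\,dz$. The leading terms cancel exactly in $\theta^s - \tfrac{1}{i}(\mathcal{E}_\infty - \mathcal{E}_0)$, leaving a remainder of order $(1/y)\,dz$ whose squared hyperbolic norm is $O(1)$, so $\int_1^\infty\!\int_0^1 O(1)\,y^{-2}\,dx\,dy<\infty$. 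At the cusp $0$, conjugating by the scaling matrix $\sigma_0$ and working in $w = \sigma_0^{-1}z$, the leading term $-1/(ic_0^2z^2)\,dz$ of $\theta^s$ from Proposition \ref{prop2} cancels $\tfrac{1}{i}\mathcal{E}_0(1,z)$ via its identity contribution $(1/c_0^2z^2)\,dz$ identified in Lemma \ref{prop1}(3), while $\mathcal{E}_\infty(1,z)$ is lower order by the same lemma; the residual form is $O(1/\Im w)\,dw$, hence $L^2$ in the cusp. The main obstacle is purely bookkeeping: one must verify that the numerical constants and the coordinate chosen at each cusp make the identity-term of the Eisenstein series match, up to the factor $\tfrac{1}{i}$, the leading cuspidal behaviour of $\theta^s$. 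It is this exact matching that dictates the specific combination $\tfrac{1}{i}(\mathcal{E}_\infty - \mathcal{E}_0)$ with its particular signs.
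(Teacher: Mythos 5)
Your proposal is correct and takes essentially the same route as the paper, whose entire proof consists of the citation ``By proposition \ref{prop1} and lemma \ref{prop2}, we conclude'': your cusp-by-cusp cancellation of the leading terms of $\theta^s$ against the identity contributions of $\frac{1}{i}\mathcal{E}_\infty$ and $\frac{1}{i}\mathcal{E}_0$, combined with the funnel integrability already recorded just before Theorem \ref{3.1} and the passage to the second form by taking imaginary parts (using $\Im(-i\mathcal{E})=-\Re\,\mathcal{E}$), is precisely the computation the paper leaves implicit. The only point you gloss at the same level as the paper is the transfer of the Lemma \ref{prop1} bounds from the weight-$0$ series $E_{\A}$ to the weight-$2$ series $E_{\A,1}$, which is harmless since $|E_{\A,1}(s,z)|\leq E_{\A}(\Re s,z)$.
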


Finally as in \cite{mpi}:
\begin{thm}\label{infinite}
The 1-form $\hat{\eta}^s$ has a meromorphic continuation to $\Re s >1/2$, with $s=1$ a regular point and $\hat{\eta}$ is a harmonic  form which is dual to $\eta$.
\end{thm}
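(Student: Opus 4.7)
My plan is to mimic the spectral-decomposition argument that proved Theorem \ref{dual}, after first correcting $\hat{\eta}^s$ so that it becomes square integrable. Set
$$R := \Re\bigl({\mathcal E}_\infty(1,z)-{\mathcal E}_0(1,z)\bigr),\qquad u^s := \hat{\eta}^s + R.$$
By Proposition \ref{prop3}, $u^s\in L^2(M)$ in the region $\Re s>1$ where the defining series converges. The first point to nail down is that $R$ is harmonic: this follows from the fact that the weight-$2$ Eisenstein 1-forms ${\mathcal E}_\infty(s,z)$ and ${\mathcal E}_0(s,z)$ are eigenforms of $\Delta$ whose eigenvalue $s(1-s)$ vanishes at $s=1$. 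Granting this, the functional equation of Theorem \ref{3.1}, rewritten as $\Delta\hat{\eta}^s + s(s-1)\hat{\eta}^s = s(s-1)\hat{\eta}^{s+2}$, transfers verbatim to $u^s$.

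Next I would plug into this equation the spectral decomposition of $u^s$ supplied by Section 3.3 — a finite discrete sum plus cusp and funnel continuous parts — and identify coefficients. One obtains, for a discrete eigenform coefficient $a_i(s)$ attached to eigenvalue $\lambda_i$, and for a continuous coefficient $h(s,t)$ (cusp) or $H(s,t,b)$ (funnel) attached to the parameter $1/2+it$ with eigenvalue $1/4+t^2$, relations of the form
$$[\lambda_i + s(s-1)]\,a_i(s) = s(s-1)\,a_i(s+2), \qquad [\tfrac{1}{4}+t^2+s(s-1)]\,h(s,t) = s(s-1)\,h(s+2,t),$$
and similarly for $H$. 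For $\Re s > 1/2$ the denominators $\lambda_i+s(s-1)$ (when $\lambda_i>0$) and $\tfrac{1}{4}+t^2+s(s-1)$ never vanish, since their zeros sit on the line $\Re s = 1/2$. Because the right-hand sides are defined by the original absolutely convergent expansion at $s+2$, these identities furnish a meromorphic continuation of every coefficient, hence of $u^s$, and therefore of $\hat{\eta}^s = u^s - R$, to the half-plane $\Re s>1/2$.

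Specializing to $s=1$, the factor $s(s-1)$ kills every non-harmonic contribution, so $a_i(1)=0$ for $\lambda_i>0$ and $h(1,t)=H(1,t,b)\equiv 0$; what remains of $u^1$ is a harmonic 1-form, and hence $\hat{\eta}^1 = u^1 - R$ is itself harmonic, establishing regularity at $s=1$. The duality statement $\int_{c'}\hat{\eta}^1 = \eta\cdot c'$ then follows by analytic continuation from Theorem \ref{3.1}: tested against any closed compactly supported cycle $c'$, the pairing $\int_{c'}\hat{\eta}^s$ is holomorphic in $s$ and equals the constant $\eta\cdot c'$ throughout $\Re s>1$, hence throughout the continuation domain. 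The main technical obstacle will be to justify the term-by-term action of $\Delta$ on the spectral decomposition and to control the continuous-spectrum integrals under the substitution $s\leadsto s+2$; this leans on the decay estimates of Lemma \ref{prop1} together with the resolvent theory recalled in Section 3.1.
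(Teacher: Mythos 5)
Your proposal follows essentially the same route as the paper: the paper's own proof consists precisely of Propositions \ref{prop2} and \ref{prop3} (the correction by $\Re({\mathcal E}_\infty(1,\cdot)-{\mathcal E}_0(1,\cdot))$ to land in $L^2$) followed by the coefficient-wise continuation via the functional equation and the spectral decomposition, exactly as was done for Theorem \ref{dual} and in the finite-volume reference \cite{mpi}. One small correction: the zeros of $\lambda_i+s(s-1)$ do \emph{not} all lie on $\Re s=1/2$ --- for the finitely many discrete eigenvalues $\lambda_i\in(0,1/4)$ of an infinite-volume surface they sit at the real points $s_i=1/2+\sqrt{1/4-\lambda_i}\in(1/2,1)$, which is exactly why the continuation is only meromorphic on $\Re s>1/2$; this does not affect regularity at $s=1$, where $s(s-1)$ vanishes while $\lambda_i\neq 0$.
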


\section{Degenerations.}\label{degene}
\subsection{Background material and the main results.}
A family of degenerating hyperbolic surfaces consists of a
manifold $M$ and a family $(g_l)_{l>0}$ of Riemannian metrics on
$M$ that satisfy the following assumptions: $M$ is an oriented
surface of negative  Euler characteristic and the metrics $g_l$
are hyperbolic, chosen in such a  way that there are finitely
many closed curves $c_i$, geodesic with respect to all metrics,
with the length $ l_i$ of each curve converging to 0 as $l$
decreases. On the complement of the distinguished curves, the
sequence of metrics is required   to converge to a hyperbolic
metric. More precisely there are finitely many disjoint open
subsets $C_i\subset M$ that are   diffeomorphic to cylinders
$F_i\times J_i$ where $J_i\subset\mathbb R$ is a   neighborhood of
$0$. The complement of $\bigcup_i C_i$ is relatively compact.
The restriction of each metric $g_l$ to $C_i=F_i\times J_i$ is a
product metric
    $$ (x,a)\longmapsto(l_i^2+a^2)dx^2+(l_i^2+a^2)^{-1}da^2 $$
   and $l_i\to 0$ as $l\to0$ (the curves $F_i\times\{0\}\subset
C_i$ are closed geodesics of length $l_i$ with respect to
$g_l$). Let $M_{l}$ denote the surface $M$ equipped with the
metric $g_l$ if $l>0$ and let $M_0=M\backslash \bigcup_i c_i$
carry
 the limit metric $\lim_{l\to0}g_{l}$. Note that $M_0$ is a
complete hyperbolic surface by definition, which contains a pair
of cusps for each $i$.\\
Here we consider a  family of surfaces $S_l=\Gamma_l\backslash H$
degenerating to the surface $S$ with only one geodesic $c_l$
being pinched, $\Gamma_l$ containing the transformation
$\sigma_l(z)=e^lz$ corresponding to $c_l$. Let $K_l$ be $S_l$
minus $C_l$ the standard collar for $c_l$. There exist
homeomorphisms $f_l$ from $S_l\backslash c_l$ to $S$, with
$f_l$ tending to isometries $C^2$-uniformly on the compact core
$K_l \subset S_l$; define $\pi_l=f_l^{-1}$. Suppose that $p$ is one of the two cusps of
$S$ arising from pinching $c_l$. Let
$S_0=\Gamma\backslash H$ be the component of $S$ containing
$p$ and conjugate $\Gamma$ to represent the cusp by the
translation $w\mapsto w+1$, in the following $p=\infty$.\\
 Let for $\Re s>1$,
$\alpha_l(s,z)=\sum_{\gamma \in \<\sigma_l\>\backslash\Gamma_l}\gamma* \left[\left(
\frac{y}{|z|}\right)^{s-1}\Im(z^{-1}dz)\right]$ such that the
hyperbolic Eisenstein series $\Omega_{c_l}=\Omega_l$ is related
by $\Omega_l(s,z)=\frac{1}{k(s)}\alpha_l(s+1,z)$. Without loss of generality we
suppose $S_l$ having only one funnel $F_1$. With the
notations of the beginning, $S_l=K\cup(C_1\cup...\cup
C_{n_c})\cup F_1$ and $c_l$ is the one geodesic of the boundary
of the compact core $K$. We consider the specific case of $p$,
the limit of the right side of the $c_l$-collar, contained in
$S_l\backslash F_1$ .
\begin{thm}\label{deg} Let $\Re s>1$, the family of 1-forms
$\frac{1}{l^s}\alpha_l(s,\pi_l(.))$ converges uniformly on
compact subsets  of $S_0$ to $\Im{\mathcal E}_\infty(s,.)$.
\end{thm}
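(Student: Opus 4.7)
The plan is to exploit the logarithmic change of coordinates $w = (\log z)/l$, which conjugates the hyperbolic generator $\sigma_l(z) = e^l z$ into the parabolic translation $w \mapsto w + 1$, and identifies a neighborhood of the shrinking collar in $S_l$ with a neighborhood of the new cusp $p = \infty$ in $S_0$. By the continuity of the holonomy representation inherent in the definition of a degenerating family (and by the assumption on $\pi_l$), each element $\gamma \in \Gamma_l$, read in the $w$-coordinate as $\tilde\gamma$, converges to an element $\gamma_0 \in \Gamma$ uniformly on compacta of $H$; this yields a natural bijection $\langle \sigma_l\rangle \backslash \Gamma_l \leftrightarrow \Gamma_\infty \backslash \Gamma$ compatible with $\pi_l$.

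First I would check the pointwise convergence of each summand. With $z = e^{lw}$, one computes $y/|z| = \sin(l\, \Im w)$ and $z^{-1}\, dz = l\, dw$, so that for each coset representative
\begin{equation*}
\gamma^*\!\left[\left(\tfrac{y}{|z|}\right)^{s-1}\Im(z^{-1}dz)\right] = \sin\!\bigl(l\, \Im\tilde\gamma w\bigr)^{s-1}\cdot l \cdot d(\Im \tilde\gamma w).
\end{equation*}
Dividing by $l^s$ and using $\sin(lv)/l \to v$ as $l \to 0$, this converges to $(\Im \gamma_0 w)^{s-1}\, d(\Im \gamma_0 w) = y(\gamma_0 w)^{s-1}\, d\Im(\gamma_0 w)$, which is precisely the $\gamma_0$-th summand of $\Im\, \mathcal{E}_\infty(s, w)$ (with scaling matrix $\sigma_\infty = \mathrm{id}$).

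The central step is to upgrade this pointwise convergence to uniform convergence of the full sum. The elementary inequality $\sin(lv) \leq lv$ on $[0, \pi/2]$ (which covers the relevant range on the fundamental strip $0 \leq \Re w \leq 1$, $0 < \Im w < \pi/l$) yields a uniform pointwise majorant $(\Im \tilde\gamma w)^{\sigma - 1}\, |d\Im \tilde\gamma w|$, with $\sigma = \Re s > 1$. Combined with the $C^2$-uniform convergence $\tilde\gamma_l \to \gamma_0$ on compacta, this is dominated coset-by-coset, uniformly in small $l$, by a constant multiple of $y(\gamma_0 w)^\sigma\, |dw|/y$. The classical estimate for the parabolic Eisenstein series (cf.\ the estimates in Lemma \ref{prop1}) ensures that $\sum_{[\gamma_0] \in \Gamma_\infty \backslash \Gamma} y(\gamma_0 w)^\sigma$ converges absolutely and uniformly on compacta of $S_0$ for $\sigma > 1$; dominated convergence then yields the theorem.

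The main obstacle is controlling the tail of the coset sum uniformly in $l$: the identification $[\tilde\gamma_l] \leftrightarrow [\gamma_0]$ is natural only coset-by-coset, so one cannot a priori compare the entire tail uniformly. I would overcome this by a standard diagonal exhaustion: split the sum into a finite \emph{head} of cosets (handled by the $C^2$-continuity of $\pi_l$) and a \emph{tail} (bounded uniformly in $l$ by the Eisenstein majorant of Lemma \ref{prop1}), then let the head exhaust all cosets while the tail contribution shrinks.
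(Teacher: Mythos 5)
Your setup (the logarithmic coordinate $w=\frac{1}{l}\log z$, the pointwise computation of each summand, and the head/tail split) matches the paper's strategy, and you have correctly located the crux: controlling the tail of the coset sum uniformly in $l$. But your proposed resolution of that crux is not a proof, and as stated it cannot work. First, there is no natural bijection $\langle\sigma_l\rangle\backslash\Gamma_l\leftrightarrow\Gamma_\infty\backslash\Gamma$: the map $\rho_l\colon A\mapsto\tilde f_l^{-1}A\tilde f_l$ only gives an \emph{inclusion} of the cosets of $\Gamma_\infty\backslash\Gamma$ into the set $R$ of ``right'' cosets of $\tilde\Gamma_l$ (the paper must first discard the ``left'' cosets, and even then $R$ contains extra cosets, coming from words that cross the collar, which converge to no element of $\Gamma$). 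Consequently the tail cannot be dominated ``coset-by-coset'' by the summands of $E_{\infty}(s,\cdot)$ on $S_0$: a comparison $\Im\tilde\gamma w\le C\,\Im\gamma_0 w$ coming from uniform convergence on compacta holds with a constant that is uniform only for each fixed coset, and Lemma \ref{prop1} lives entirely on the limit surface, so it says nothing about $\sum_{R-G_l}|\tilde{\gamma}'w|^\sigma$. The dominated convergence you invoke therefore has no dominating function.

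The missing step --- which is the actual content of the paper's proof of Theorem \ref{auto} --- is a geometric counting estimate for the tail. One shows that for $l$ small every translate $A({\mathcal F}_l)$ with $[A]\in R-G_l$ lies, modulo $\langle w\mapsto w+1\rangle$, in the thin strip $\{0\le\Re w<1,\ \Im w<2\epsilon\}$; one then converts $\sum_{w^{-1}(R-G_l)w}\Im^{\sigma}(\gamma z_l)$ into an integral over that strip via the mean-value identity $\int_{B(z_0,\epsilon_0)}y^\sigma\,d\mu=\Lambda_{\epsilon_0}y(z_0)^\sigma$ together with a multiplicity bound $O(\rho(z_0)^{-2})$ for the covering map (the injectivity-radius lemma quoted from Wolpert). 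This yields a bound of order $l\cdot(\epsilon l)^{\sigma-1}$ which, after multiplication by the prefactor $\sin^{-\sigma}(l\Im w)\sim(l\Im w)^{-\sigma}$, is $O(\epsilon^{\sigma-1})$ uniformly in $l$; since $\sigma>1$ this tends to $0$ with $\epsilon$, while the finite head converges by Lemma \ref{lem}. Without some version of this counting argument, your claim that the tail is ``bounded uniformly in $l$ by the Eisenstein majorant'' is an assertion of exactly what has to be proved.
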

It is a particular case of the  theorem \ref{auto} below.

\smallskip
The sketch of the proof of this theorem follows these of 
the finite volume case (\cite{preprint}, \cite{w2}, see also 
\cite{jorg}). First of all  we recall
some material and results.

The following lemma can be found for example in \cite{borth}. The neighborhood of points within distance $a$ of a geodesic
$\gamma$, where $d(z,\gamma)$ is the hyperbolic distance from $z$ to $\gamma$,
$$G_a=\{z\in K, d(z,\gamma)\leq a\}\, ,$$
is isometric for small $a$ to a half-collar $[0,a]\times S^1$,
$ds^2=dr^2 +l^2 \cosh^2 r\, d\theta^2$. 
\begin{lemma}
Suppose that $\gamma$ is a simple closed geodesic of length
$l(\gamma)$ on a geometrically finite hyperbolic surface $M$.
Then $\gamma$ has a collar neighborhood of half-width $d$, such
that

$$\sinh(d)=\frac{1}{\sinh(l(\gamma)/2)}\, .$$
 As a consequence,
if $\eta$ is any other closed geodesic intersecting $\gamma$
transversally (still assuming $\gamma$ is simple), then the
lengths of the two geodesics satisfy the inequality

$$\sinh(l(\eta)/2)\geq \frac{1}{\sinh(l(\gamma)/2)}\, .$$

\end{lemma}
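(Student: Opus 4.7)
\emph{Plan.} The proof splits into two essentially independent parts: an embedding argument in the universal cover that produces the collar of the stated half-width, and a short tube-crossing estimate that gives the length inequality.

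\emph{The collar via the universal cover.} Lift $\gamma$ to a component $\tilde\gamma$ of its preimage in $H$; after conjugating in $PSL(2,\RR)$ we may take $\tilde\gamma$ to be the positive imaginary axis with $\Gamma$-stabilizer $\Gamma_1=\<T\>$, $T:z\mapsto e^{l}z$, $l=l(\gamma)$. A direct computation gives $\ch d_H(re^{i\theta},\tilde\gamma)=1/\sin\theta$, so the closed hyperbolic $d$-neighbourhood of $\tilde\gamma$ is
$$U_d=\{re^{i\theta}\in H:\sin\theta\geq 1/\ch d\},$$
bounded by the two Euclidean rays of inclination $\arcsin(1/\ch d)$ through the origin. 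The natural projection $\Gamma_1\backslash U_d\to M$ is an isometric embedding if and only if $\alpha U_d\cap U_d=\emptyset$ for every $\alpha\in\Gamma\setminus\Gamma_1$, equivalently $d_H(\tilde\gamma,\alpha\tilde\gamma)\geq 2d$. Since $\gamma$ is simple, distinct lifts do not cross, and since $\Gamma$ is torsion-free no element swaps the endpoints of $\tilde\gamma$; hence $\tilde\gamma$ and $\alpha\tilde\gamma$ are disjoint for every $\alpha\in\Gamma\setminus\Gamma_1$. The conjugate $S:=\alpha T\alpha^{-1}\in\Gamma$ is then hyperbolic of translation length $l$ with axis $\alpha\tilde\gamma$, and does not commute with $T$. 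The classical two-generator (Buser--Keen) inequality for such a pair in a torsion-free discrete Fuchsian group,
$$\sh(D/2)\,\sh(l/2)\geq 1,\qquad D:=d_H(\tilde\gamma,\alpha\tilde\gamma),$$
(proved by hyperbolic trigonometry of the configuration formed by the common perpendicular between the two axes, or equivalently by an explicit trace computation in $SL(2,\RR)$) forces $D\geq 2d$ once $d$ is chosen with $\sh(d)\sh(l/2)=1$. Therefore $U_d$ descends to an embedded collar of half-width $d$. Parabolic $\alpha$ need no separate treatment since $\alpha T\alpha^{-1}$ is still hyperbolic of length $l$.

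\emph{Length inequality.} If $\eta$ is a closed geodesic crossing $\gamma$ transversally at a point $P$, then $\eta$ must enter the embedded collar of half-width $d$, pass through $P$, and exit through the opposite boundary circle. The sub-arc of $\eta$ contained in the collar has hyperbolic length at least $2d$, with equality only for a perpendicular crossing. Since this sub-arc lies inside the closed curve $\eta$,
$$l(\eta)\geq 2d,\qquad \sh(l(\eta)/2)\geq\sh(d)=1/\sh(l(\gamma)/2).$$

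\emph{Main difficulty.} The entire non-routine content sits in the two-generator inequality $\sh(D/2)\sh(l/2)\geq 1$; the rest is a normalisation in the upper half-plane followed by a one-line tube-crossing estimate. This inequality is the heart of the classical collar lemma and applies without change in the geometrically finite, infinite-volume setting, since only torsion-freeness of $\Gamma$ is used.
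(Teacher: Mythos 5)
The paper does not actually prove this lemma: it is quoted as the classical collar lemma (Keen/Randol/Buser) with a pointer to \cite{borth}, so there is no in-paper argument to compare against. Your proof is a correct rendition of the standard argument, and it is complete modulo the one classical input you isolate. That input, $\sh(D/2)\sh(l/2)\geq 1$ for $D=d_H(\tilde\gamma,\alpha\tilde\gamma)>0$, is indeed the heart of the matter and is verified exactly by the trace computation you allude to: normalizing the common perpendicular of the two axes along the imaginary axis, one of $TS^{\pm1}$ (with $S=\alpha T\alpha^{-1}$) has trace $\pm\bigl(2\ch^2(l/2)-2\ch(D)\sh^2(l/2)\bigr)$; discreteness plus torsion-freeness force this to have absolute value at least $2$, and since $\ch(D)>1$ rules out the branch $\ch(D)\sh^2(l/2)\leq\sh^2(l/2)$, one gets $(\ch D-1)\sh^2(l/2)\geq 2$, i.e.\ $\sh(D/2)\sh(l/2)\geq 1$, which is exactly $D\geq 2d$. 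Two small points you should make explicit. First, besides transversal crossings you must exclude the degenerate case where $\tilde\gamma$ and $\alpha\tilde\gamma$ are distinct but share one endpoint at infinity (so that $D=0$ without intersection); this cannot happen because two hyperbolic elements of a discrete group share either both fixed points or none. Second, in the tube-crossing step the arc of $\eta$ inside the collar is a \emph{proper} sub-arc of the closed curve only because $\eta$ is not entirely contained in the collar (the only closed geodesics in an embedded annular collar are the powers of its core $\gamma$); with that observation the connected component of $\eta\cap C$ through the intersection point lifts to an arc of $\tilde\eta\cap U_d$ joining the two boundary equidistants, hence has length at least $2d$, giving $l(\eta)\geq 2d$ and the stated inequality. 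With these remarks added, your argument is a self-contained proof of a statement the paper only cites.
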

\begin{lemma}
Let $\gamma$ be a simple closed geodesic of length $l$ on a
complete hyperbolic surface $M$. If $\alpha$ is a simple closed
geodesic that does not intersect $\gamma$  then, where $d(\gamma,\alpha)$ is the hyperbolic distance of $\gamma$ to $\alpha$
$$\cosh d(\gamma, \alpha)\geq \coth(l/2)\, .$$
\end{lemma}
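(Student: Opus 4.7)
The plan is to lift to the universal cover and reduce the claim to a short Euclidean computation in $H$. Choose lifts $\tilde\gamma$ of $\gamma$ and $\tilde\alpha$ of $\alpha$ that realize the distance, so $d(\tilde\gamma,\tilde\alpha)=d(\gamma,\alpha)$. Conjugating in $PSL(2,\RR)$ I place $\tilde\gamma$ on the imaginary axis, so the generator of its stabilizer in $\Gamma$ acts as $\sigma(z)=e^l z$. Since $\alpha$ does not meet $\gamma$, the lift $\tilde\alpha$ is disjoint from the imaginary axis and can be taken to be a Euclidean semicircle $\{|z-c|=R\}$ with $0<R<c$.

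The common perpendicular between $\tilde\gamma$ and $\tilde\alpha$ is the Euclidean semicircle $|z|=\sqrt{c^2-R^2}$ centered at the origin, which is orthogonal to the imaginary axis by construction and orthogonal to $\tilde\alpha$ because $c^2=(\sqrt{c^2-R^2})^2+R^2$. Applying the formula $\cosh d(z,z')=1+|z-z'|^2/(2\Im z\,\Im z')$ at its two feet yields
$$\cosh d(\tilde\gamma,\tilde\alpha)=c/R.$$

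Next I use simplicity of $\alpha$: distinct lifts of a simple closed geodesic in $H$ are pairwise disjoint, since two intersecting lifts would project to a self-crossing of $\alpha$ in $M$. In particular $\sigma(\tilde\alpha)$ is either equal to or disjoint from $\tilde\alpha$, and equality would force $\tilde\alpha$ to be the axis of $\sigma$, contradicting that this axis is $\tilde\gamma$. Hence the Euclidean semicircle $\sigma(\tilde\alpha)=\{|z-e^l c|=e^l R\}$ is disjoint from $\tilde\alpha$. Neither of the two underlying closed discs is contained in the other (the inequality $c>R$ excludes one containment, $l>0$ the other), so disjointness reduces to external separation of the two closed Euclidean discs:
$$c(e^l-1)\geq R(e^l+1),\qquad\text{i.e.\ }\ c/R\geq\coth(l/2).$$
Combined with the previous identity this delivers $\cosh d(\gamma,\alpha)\geq\coth(l/2)$.

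The main subtlety is the topological fact that lifts of a simple closed geodesic are pairwise disjoint in $H$; once this is granted, the proof reduces to elementary Euclidean geometry of two circles in a common half-plane. Alternatively, one can deduce the lemma directly from the collar lemma stated just above: the argument on $\tilde\alpha$ and $\sigma(\tilde\alpha)$ shows that a disjoint simple closed geodesic cannot enter the standard collar of $\gamma$ of half-width $d_0$ satisfying $\sinh d_0=1/\sinh(l/2)$, and $\cosh d_0=\coth(l/2)$ gives the stated bound.
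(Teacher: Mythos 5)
Your proof is correct; note, however, that the paper itself gives no proof of this lemma --- it is quoted, together with the collar lemma preceding it, from \cite{borth} --- so there is no internal argument to compare against. Your self-contained argument checks out at every step: choosing distance-realizing lifts forces $\tilde\alpha$ to be a semicircle $|z-c|=R$ with $0<R<c$ (a vertical-line lift would make the distance to the imaginary axis an unattained infimum equal to $0$, contradicting $d(\gamma,\alpha)>0$ being realized); the computation $\cosh d(\tilde\gamma,\tilde\alpha)=c/R$ along the common perpendicular $|z|=\sqrt{c^2-R^2}$ is right; and the reduction of the disjointness of $\tilde\alpha$ and $\sigma(\tilde\alpha)=\{|z-e^{l}c|=e^{l}R\}$ to $c(e^{l}-1)\geq R(e^{l}+1)$, i.e.\ $c/R\geq\coth(l/2)$, is exactly where simplicity of $\alpha$ and discreteness enter (your exclusion of the nested cases via $c>R$ and $l>0$ is what makes this reduction legitimate, and the weak inequality correctly absorbs the tangency-on-the-real-axis case, which is still disjoint in $H$). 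Your closing remark is also accurate and is essentially how the statement is obtained in \cite{borth}: since $\sinh d_0=1/\sinh(l/2)$ gives $\cosh d_0=\sqrt{1+\sinh^2 d_0}=\coth(l/2)$, the lemma is precisely the assertion that a disjoint simple closed geodesic cannot enter the standard collar of half-width $d_0$ about $\gamma$, i.e.\ it is the preceding collar lemma restated as a distance bound; your direct two-circle computation buys a proof that does not presuppose the embeddedness of the collar, at the cost of a few lines of Euclidean geometry.
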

A standard collar for a length $l$ geodesic is a cylinder
isometric to ${\mathcal C}\backslash \<z\mapsto e^l z\>$ with
${\mathcal C}=\{ z=re^{i\theta}, 1\leq r\leq e^l,
l<\theta<\pi-l\}\subset H$ with the restriction of the
hyperbolic metric, and $\<z\mapsto e^l z\>$ the cyclic
group generated by the transformation $z\mapsto e^l z$. There is a constant $k_0$ (the
short geodesic constant) such that each closed geodesic on $S_l$
of length at most $k_0$ has a neighborhood isometric to the
standard collar and each cusp for $S_l$ has a neighborhood
isometric to the standard cusp; furthermore, the collars for
short geodesics and the cusp regions are all mutually disjoints.

Now to study the right
side of the $c_l$-collar let $w=\frac{1}{l} \log z$, with the principal branch $z\in H$,
and conjugate $\Gamma_l$ by the map $w$ to obtain
${\tilde\Gamma}_l$ acting on ${\mathcal S}_l=\{w, 0<\Im
w<\pi/l\}$. The hyperbolic metric on ${\mathcal S}_l$ is
$ds_l^2=\left(\frac{l|dw|}{\sin (l\Im w)}\right)^2$, which tends
uniformly on compact subsets to $\left(\frac{|dw|}{\Im
w}\right)^2$.  ${\tilde\Gamma}_l$ is a (non M\"obius) group of
deck transformations acting on ${\mathcal S}_l$; the quotient
${\tilde\Gamma}_l\backslash{\mathcal S}_l$ is $S_l$. Let
$\hat{f}_l$ be the restriction  of $f_l$ to the component ${\mathcal S}^{(r)}_l$ of
$S_l\backslash c_l$ containing the right half-collar for $c_l$.
Let $F_l$ be a lift of $\hat{f}_l$ to the universal covers
${\mathcal A}_l$ and $H$, where  ${\mathcal A}_l$ is the simply
connected component of $H\backslash \pi^{-1}(c_l)$ which
contains the standard right collar $\{z=re^{i\theta}, 1\leq
r\leq e^l,l<\theta<\pi/2\}$. More precisely \cite{preprint}(p.350), \cite{w2}:
\begin{lemma}
The simply
connected component  ${\mathcal A}_l$ contains $\{z=re^{i\theta}, 1\leq
r\leq e^l,lc(l)<\theta<\pi/2\}$ where $c(l) \to 0,\, l\to 0$.
\end{lemma}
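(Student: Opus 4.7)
Plan: I will show that every lift of $c_l$ other than the $y$-axis $\ell$ stays at angular distance $O(l^2)$ from the positive real axis within the annulus $\{1\le r\le e^l\}$. Since $O(l^2)/l\to 0$, it will then suffice to take $c(l)$ to be the supremum of those angles divided by $l$.

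First, unwind the definitions. The preimage $\pi^{-1}(c_l)$ is $\ell$ together with all its $\Gamma_l$-translates $\alpha=\gamma\ell$ for $\gamma\in\Gamma_l\setminus\langle\sigma_l\rangle$; simplicity of $c_l$ forces every such $\alpha$ to be a geodesic in $H$ disjoint from $\ell$. The boundary points $0$ and $\infty$ of $\ell$ are hyperbolic fixed points of $\sigma_l$ and hence, by geometric finiteness of $\Gamma_l$, not parabolic fixed points; in particular no $\gamma\in\Gamma_l\setminus\langle\sigma_l\rangle$ can send $\infty$ back into $\{0,\infty\}$. Consequently every such $\alpha$ meeting the right half-plane is a Euclidean half-circle with two endpoints $0<a<b$ on the positive real axis.

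For such an $\alpha$, a direct polar-coordinate computation yields
$$\sin\theta_{\max}(\alpha)=\frac{b-a}{b+a},\qquad d(\alpha,\ell)=\ln\cot\!\bigl(\theta_{\max}(\alpha)/2\bigr),$$
where $\theta_{\max}(\alpha)$ denotes the maximum argument attained on $\alpha$. The collar lemma applied to $c_l$ produces an embedded tubular neighborhood of half-width $d_l$ with $\sinh d_l = 1/\sinh(l/2)$, so the hyperbolic distance in $H$ between any two distinct lifts of $c_l$ is at least $2d_l$. Combining these gives $\theta_{\max}(\alpha)\le 2\operatorname{arccot}(e^{2d_l})$, and since $d_l\sim \ln(4/l)$ as $l\to 0$ one obtains $\theta_{\max}(\alpha)=O(l^2)$ uniformly in $\alpha$.

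Setting $c(l):=\sup_\alpha \theta_{\max}(\alpha)/l$, the previous estimate gives $c(l)=O(l)\to 0$. By construction no other lift of $c_l$ enters $\{1\le r\le e^l,\ lc(l)<\theta<\pi/2\}$; this region is connected, contains the standard right collar (for $l$ small enough that $c(l)<1$), and avoids $\pi^{-1}(c_l)$, so it is contained in ${\mathcal A}_l$. The main subtlety is ruling out the potential lifts of $c_l$ that would be Euclidean vertical half-lines—axes sharing the endpoint $\infty$ with $\ell$—which is precisely the point at which the distinction between hyperbolic and parabolic fixed points in a geometrically finite Fuchsian group is essential.
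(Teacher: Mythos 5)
Your argument is correct and is essentially the standard one that the paper itself relies on without reproducing (the lemma is quoted from \cite{preprint} and \cite{w2}): the collar lemma forces every lift of $c_l$ other than the imaginary axis $\ell$ to lie at hyperbolic distance at least $2d_l$ from $\ell$, with $\sinh d_l=1/\sinh(l/2)$, hence at angle $\theta\le\arcsin\bigl(1/\cosh(2d_l)\bigr)=O(l^2)$; and a connected set that meets the standard right half-collar and avoids $\pi^{-1}(c_l)$ must lie in ${\mathcal A}_l$. The only step whose justification is off is the exclusion of lifts that are vertical half-lines (or, more generally, share an endpoint with $\ell$): this is not a matter of parabolic fixed points or geometric finiteness, but follows either from discreteness (two hyperbolic axes in a Fuchsian group coincide or have disjoint endpoint sets) or, more economically, from your own collar estimate, since a geodesic asymptotic to $\ell$ would be at distance $0$ from it, contradicting the lower bound $2d_l>0$. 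With that justification repaired the proof is complete.
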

Start with the standard $\Gamma$ fundamental domain
${\mathcal F}=\{w\in H, 0\leq \Re w<1, \Im w\geq \Im A(w), \forall A\in
\Gamma\}$.
Set $D_l=F_l^{-1}(\mathcal F)$,
then $D_l$ is a fundamental domain of $S_l$.
Divide the cosets of $\< \sigma_l \>\backslash (\Gamma_l - \< \sigma_l \>)$ into two classes $D= \{[A],
A\in\Gamma_l, \inf \Re A(D_l)>0\}$ and
$G=\{[A], A\in\Gamma_l, \sup \Re
A(D_l)<0\} $.

Then  $\hat{f}_l$ has a lift $\tilde{f}_l$, a homeomorphism from
a subdomain of ${\mathcal S}_l$  to $H$: $\tilde{f}_l=F_l\circ w^{-1}: w({\mathcal A}_l)\rightarrow H$. The homeomorphism $\tilde{f}_l$
induces a group homomorphism $\rho_l: \Gamma\rightarrow
\tilde{\Gamma}_l$,  $A\mapsto \tilde{f}_l
^{-1}A\tilde{f}_l$, $A\in\Gamma$. We call $\rho_l(A)\in
\tilde{\Gamma}_l$ the element corresponding to $A\in\Gamma$.
Now by our normalizations for $\tilde{\Gamma}_l$ and $\Gamma$
the translation $w\mapsto w+1$ corresponds to itself. If we
specify the further normalization $ \tilde{f}_l(i)=i$ then the
lifts $\tilde{f}_l$ are uniquely  determined and then we have (see e.g. \cite{w2} p.107):
\begin{lemma}\label{lem}
The $\tilde{f}_l$ tend uniformly on compact
subsets  to the  identity, and thus for $A\in\Gamma$, the
corresponding elements $\rho_l(A)$ tend uniformly on compact
subsets to $A$.
\end{lemma}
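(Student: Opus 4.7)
The plan is to combine three ingredients: the convergence of the ambient metric in the $w$ coordinate, the $C^2$ convergence of $f_l$ to isometries on the compact core, and the two normalization conditions, via a normal-family argument.

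First I would set up the geometry. Under $w=\frac{1}{l}\log z$, the strip $\mathcal{S}_l=\{0<\Im w<\pi/l\}$ carries the metric $(l|dw|/\sin(l\Im w))^2$, which tends uniformly on compact subsets of the upper half plane $H$ to the standard hyperbolic metric $(|dw|/\Im w)^2$. By the preceding lemma, $w(\mathcal{A}_l)$ contains $\{0<\Re w<1,\ c(l)<\Im w<\pi/(2l)\}$ with $c(l)\to 0$, and extending by equivariance via $\tilde{\Gamma}_l$ on the source and $\Gamma$ on the target one sees that the natural domain of $\tilde{f}_l$ eventually contains any prescribed compact $K\subset H$.

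Next I would extract subsequential limits. Since $f_l$ converges $C^2$-uniformly to an isometry on the compact core and the metrics in the $w$ coordinate converge to the fixed hyperbolic metric on $H$, the lifts $\tilde{f}_l$ have uniformly bounded $C^1$ norm on any compact $K\subset H$ and their first derivatives are uniformly close to orientation-preserving linear isometries of $H$. Arzel\`a--Ascoli then yields a subsequence converging uniformly on compact subsets to a $C^1$ map $f_\infty\colon H\to H$. Because the metric distortion of $\tilde{f}_l$ tends to zero and the $\tilde{f}_l$ are orientation preserving, $f_\infty$ is a conformal isometry of $H$, hence a M\"obius transformation of $H$.

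I would then pin down the limit by using the normalizations. The condition $\tilde{f}_l(i)=i$ gives $f_\infty(i)=i$. Because $w\mapsto w+1$ corresponds to itself under $\rho_l$, the defining relation $\rho_l(A)=\tilde{f}_l^{-1}A\tilde{f}_l$ with $A\colon w\mapsto w+1$ gives $\tilde{f}_l(w+1)=\tilde{f}_l(w)+1$ whenever both sides are defined, so $f_\infty(w+1)=f_\infty(w)+1$. The only M\"obius automorphism of $H$ fixing $i$ and commuting with $w\mapsto w+1$ is the identity, so every subsequential limit equals $\mathrm{id}$, and the whole family $(\tilde{f}_l)$ tends to the identity uniformly on compact subsets.

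Finally, for the second assertion: given $A\in\Gamma$ and a compact $K\subset H$, the set $K\cup A(K)$ is compact, and for $l$ small enough $\tilde{f}_l$ and $\tilde{f}_l^{-1}$ are defined and close to the identity on a fixed neighborhood of it. Writing $\rho_l(A)=\tilde{f}_l^{-1}\circ A\circ \tilde{f}_l$ and using the uniform convergence $\tilde{f}_l,\tilde{f}_l^{-1}\to\mathrm{id}$ on compact sets, continuity of composition yields $\rho_l(A)\to A$ uniformly on $K$. The main obstacle is the equicontinuity step: one must transfer the $C^2$ convergence of the quotient maps $f_l$ (between varying metric spaces) into uniform $C^1$ control of the lifts $\tilde{f}_l$ in fixed Euclidean coordinates on $H$, in spite of the fact that both the target metric on the strip $\mathcal{S}_l$ and the domain of definition $w(\mathcal{A}_l)$ vary with $l$.
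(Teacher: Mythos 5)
The paper offers no proof of this lemma: it is quoted directly from Wolpert \cite{w2} (the citation immediately precedes the statement), so there is no in-paper argument to measure yours against. Your reconstruction is the standard one and, in outline, the argument of the cited source: exhaust $H$ by the domains $w({\mathcal A}_l)$ (correctly using the $\langle w\mapsto w+1\rangle$-equivariance to pass from $0\leq \Re w\leq 1$ to arbitrary real parts), extract subsequential limits by a normal-family argument, observe that any limit is an orientation-preserving local isometry of $(H,|dw|^2/(\Im w)^2)$ and hence a M\"obius transformation, and kill it with the two normalizations. Your computation that the only automorphism of $H$ fixing $i$ and commuting with $w\mapsto w+1$ is the identity is correct (commutation forces the fixed point $\infty$, hence an affine map, hence a translation, hence the identity), and the deduction of $\rho_l(A)\to A$ from $\tilde f_l,\ \tilde f_l^{-1}\to \mathrm{id}$ is routine.

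The one place where the write-up asserts more than it establishes is the equicontinuity step, which you yourself flag at the end: the hypothesis as stated gives $C^2$ convergence of $f_l$ to isometries only on the compact core $K_l=S_l\setminus C_l$, whereas a fixed compact subset of $H$ in the $w$-coordinate (say with $\Im w$ between $1/2$ and $10$) projects partly into the standard collar $C_l$ (which in the $w$-plane is $1<\Im w<\pi/l-1$), where no convergence of $f_l$ is assumed. To close this you must either invoke the stronger formulation at the start of Section \ref{degene}, where the metrics $g_l$ converge on all of $M\setminus\bigcup_i c_i$ and the maps converge on the preimages of compact subsets of $S_0$, or run the normal-family argument only over the part of the compact set lying above $K_l$ and propagate the limiting M\"obius transformation to the rest by rigidity and the explicit collar geometry. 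This is a fixable technical point rather than a wrong turn; with it supplied, the proof is complete and consistent with the reference the paper relies on.
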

 For $q\in \NN$, we associate to the pinching geodesic $c_l$, the $q$-form defined for $\Re s >1$ by:
$$A_{l,q}(s,z)=\sum_{\<\sigma_l\>\backslash \Gamma_l} \left(\frac{\gamma'(z)}{\gamma(z)}\right)^q \sin^{s-q}\theta(\gamma z)\, dz^q\, .$$
 Divide the cosets $\< z\mapsto z+1\>\backslash (\tilde{\Gamma}_l
-\< z\mapsto z+1\>)$ into two classes, the left and the right: for
${\mathcal F}_l=\tilde{f}_l^{-1}({\mathcal F})$, $L=w G w^{-1}=\{[A],
A\in\tilde{\Gamma}_l, \inf \Im A({\mathcal F}_l)>\pi/2l\}$ and
$R=w D w^{-1}=\{[A], A\in\tilde{\Gamma}_l, \sup \Im
A({\mathcal F}_l)<\pi/2l\}$ (the line $\{ \Im w=\pi/2l\}$ is a
lift of $c_l$, and we write $[A]$ for the $\<z\mapsto z+1\> $ coset of
$A$).
In particular the cosets $\< z\mapsto z+1\>\backslash (\Gamma-\<z\mapsto
z+1\> ) $ correspond to the right cosets of $\tilde{\Gamma}_l$: $\{[\rho_l(A)], A\in \Gamma, \<w\mapsto w+1\>\}\subset R$ .
Then we can write, where $\chi_+$ is the characteristic function of $\{ \Re z>0\}$ and $\chi_-$ the one of $\{ \Re z \leq 0\}$,
\begin{eqnarray*}
A_{l,q}(s,z)&=&\sum_{\<\sigma_l\>\backslash \Gamma_l} \left(\frac{\gamma'(z)}{\gamma(z)}\right)^q \sin^{s-q}\theta(\gamma z)\, dz^q\\
 &=&y(z)^{s-q}(\chi_+ +\sum_D 
\frac{\gamma'(z)^q}{\gamma(z)^q}\frac{|\gamma'(z)|^{s-q}}{|\gamma(z)|^{s-q}})dz^q+y(z)^{s-q}(\chi_- +\sum_G \frac{\gamma'(z)^q}{\gamma(z)^q}\frac{|\gamma'(z)|^{s-q}}{|\gamma(z)|^{s-q}})dz^q
\end{eqnarray*}
 and the $q$-form on ${\mathcal S}_l^{(r)}$:
\begin{eqnarray*}
 A_{l,q}^R(s,z)&=&y(z)^{s-q}(\chi_+ +\sum_D 
\frac{\gamma'(z)^q}{\gamma(z)^q}\frac{|\gamma'(z)|^{s-q}}{|\gamma(z)|^{s-q}})dz^q\\
&=&l^q(\sin l\Im w)^{s-q}(\chi +\sum_R (\tilde{\gamma}'w)^q|\tilde{\gamma}'w|^{s-q})dw^q, 
\end{eqnarray*}
where $\chi$ is the characteristic function of $w(\Re z >0\})$.

\begin{thm}\label{auto}
Let $S_l=\Gamma_l\backslash H$ be a family of geometrically finite hyperbolic surfaces 
degenerating to the surface $S$ with only one geodesic $c_l$
being pinched and  with $S_l$ having only one funnel $F_1$; $\Gamma_l$ contains the transformation
$\sigma_l(z)=e^lz$ corresponding to $c_l$ and the right half-collar for $c_l$ is in  $S_l\backslash F_1$. Let $S_0=\Gamma\backslash H$ be the component of $S$ containing
$p$,  the cusp arising from the right half-collar for $c_l$, $p=\infty$.
 Let $w=\frac{1}{l}\log z$ and write again with a little misuse of notation $A_{l,q}(s,w)= a_{l,q}(s,w)\, dw^q$ and the corresponding $q$-automorphic form  (for $\tilde{\Gamma}_l$)
${\hat a}_{l,q}(s,w)=\Im(w)^qa_{l,q}(s,w)$. Denote ${\hat a}_{l,q}^R$ the right half of  ${\hat a}_{l,q}$. Then the family
$(\frac{1}{l^s}{\hat a}_{l,q}^R(s,\pi_l(.)))_l$ converges uniformly on compact subsets of $S_0$ and on compact subsets of $\Re s>1$ to the Eisenstein series for weight $2q$:
$$E_{\infty,q}(s,w)=\sum_{\Gamma_\infty\backslash \Gamma} (\Im \gamma w)^s \left(\frac{c\bar{w}+d}{cw+d}\right)^q\, .$$
\end{thm}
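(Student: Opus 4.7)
The plan is to pull the problem back to the $w=\frac{1}{l}\log z$ coordinate introduced before the theorem, in which $\sigma_l:z\mapsto e^lz$ becomes $w\mapsto w+1$, the parabolic generator of the cusp $p=\infty$ of $S_0$. From the explicit formula $a^R_{l,q}(s,w)=l^q(\sin l\Im w)^{s-q}\bigl(\chi+\sum_R(\tilde\gamma'w)^q|\tilde\gamma'w|^{s-q}\bigr)$ displayed just before the theorem, multiplication by $\Im(w)^q/l^s$ gives
$$\frac{1}{l^s}\tilde a^R_{l,q}(s,w)=\Im(w)^q\left(\frac{\sin l\Im w}{l}\right)^{s-q}\Bigl(\chi+\sum_R(\tilde\gamma'w)^q|\tilde\gamma'w|^{s-q}\Bigr),$$
and the scalar prefactor $(\sin l\Im w/l)^{s-q}$ converges to $(\Im w)^{s-q}$ uniformly on compact subsets of $S_0$ and of $\{\Re s>1\}$. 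This reduces the theorem to the analysis of the right-coset sum.

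By Lemma \ref{lem}, the homomorphism $\rho_l:\Gamma\to\tilde\Gamma_l$ fixes the parabolic generator $w\mapsto w+1$ and hence descends to an injection $\Gamma_\infty\backslash\Gamma\hookrightarrow R$ with $\rho_l(A)\to A$ uniformly on compact subsets. Writing $A=\bigl(\begin{smallmatrix}a&b\\c&d\end{smallmatrix}\bigr)$, one has $|A'w|=\Im Aw/\Im w$ and $A'w/|A'w|=(c\bar w+d)/(cw+d)$, so an elementary manipulation gives the identity
$$(\Im w)^s(A'w)^q|A'w|^{s-q}=(\Im Aw)^s\left(\frac{c\bar w+d}{cw+d}\right)^q,$$
which is precisely the general term of $E_{\infty,q}(s,w)$. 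Combined with the prefactor analysis this yields the desired termwise convergence of $\frac{1}{l^s}\tilde a^R_{l,q}(s,\pi_l(\cdot))$ to $E_{\infty,q}(s,\cdot)$, with the identity coset (handled by $\chi$) supplying the leading summand $(\Im w)^s$ corresponding to $A=\mathrm{Id}$.

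The principal obstacle is to promote this termwise convergence to uniform convergence on compact subsets $K\subset S_0$, uniformly for $s$ in compacts of $\{\Re s>1\}$. One must first verify that any ``extra'' cosets in $R\setminus\rho_l(\Gamma_\infty\backslash\Gamma)$, arising from deck transformations that cross the pinching $c_l$-collar, contribute negligibly: by the collar lemma recalled above, their representatives push any $w\in K$ to points at hyperbolic distance $\sim\log(1/l)$ from a fixed base point, whence their total contribution is bounded by a positive power of $l$. For the genuinely parabolic cosets a uniform-in-$l$ majorant
$$\sum_R|\tilde\gamma'w|^{\Re s}\leq C\sum_{\Gamma_\infty\backslash\Gamma}\bigl(\Im Aw/\Im w\bigr)^{\Re s}$$
is required, whose right-hand side is the classical parabolic Eisenstein majorant, convergent for $\Re s>1$ and finite uniformly on $K$. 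This bound is obtained by splitting the sum into a finite portion (handled by Lemma \ref{lem}) and a tail (handled by the Fuchsian majorant Proposition of $\S 2$ transported back to the $H$-coordinate). Dominated convergence then delivers the uniform limit, and Theorem \ref{deg} follows by specializing to $q=1$ and taking imaginary parts.
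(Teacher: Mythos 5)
Your overall strategy coincides with the paper's: pass to the $w=\frac1l\log z$ coordinate, peel off the prefactor $(\sin l\Im w/l)^{s-q}\to(\Im w)^{s-q}$, use Lemma \ref{lem} to match the cosets $\rho_l(A)$ with those of $\Gamma_\infty\backslash\Gamma$ termwise (your identity $(\Im w)^s(A'w)^q|A'w|^{s-q}=(\Im Aw)^s\bigl(\frac{c\bar w+d}{cw+d}\bigr)^q$ is correct), and then control the remaining cosets of $R$. But the one step that carries all the analytic content is exactly the one you leave as an assertion, and as stated it does not go through. First, for the ``extra'' cosets in $R$ not of the form $\rho_l(A)$, knowing that each representative moves $w$ a hyperbolic distance $\sim\log(1/l)$ bounds the individual terms but not their sum: the number of such cosets blows up as $l\to0$ (the group $\tilde\Gamma_l$ is degenerating), so a per-term estimate times an uncontrolled cardinality proves nothing. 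Second, your proposed uniform-in-$l$ majorant $\sum_R|\tilde\gamma'w|^{\Re s}\leq C\sum_{\Gamma_\infty\backslash\Gamma}(\Im Aw/\Im w)^{\Re s}$ cannot be extracted from the Proposition of \S 2, because the constant $\mathcal C(q,\Gamma)$ there is attached to a \emph{fixed} Fuchsian group, whereas here the group varies with $l$; uniformity in $l$ is precisely what is at stake.

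The paper fills this hole with a lattice-point counting argument rather than a distance heuristic. Fixing $\epsilon>0$, it takes the finite set $G$ of cosets of $\langle z+1\rangle\backslash\Gamma$ whose images of $\mathcal F$ reach above height $\epsilon$, shows that every coset in $R-G_l$ maps $\mathcal F_l$ (mod translations) into the strip $\{0\leq\Re w<1,\ \Im w<2\epsilon\}$, and then bounds $\sum_{R-G_l}\Im^{\sigma}(\gamma z_l)$ by unfolding it into an integral of $y^{\sigma}\,d\mu$ over the corresponding region: each term equals $\Lambda_{\epsilon_0}^{-1}\int_{B(\gamma z_l,\epsilon_0)}y^\sigma\,d\mu$ by the mean-value property of $y^\sigma$, and the overlaps of the balls are controlled by the multiplicity lemma $m\leq M\rho^{-2}(z_0)$ in terms of the injectivity radius (with $\rho(z_l)\geq\epsilon_0$ on a compact of $S_0$). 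After dividing by $\sin^\sigma(l\Im w)\asymp l^\sigma$, the resulting bound is $O(\epsilon^{\sigma-1})$ \emph{uniformly in $l$} --- note it is small because $\epsilon$ is small, not because it is a positive power of $l$ as you claim. Combined with the termwise convergence of the finite sum over $G_l$ and the convergence of the tail of $E_{\infty,q}$, a three-$\epsilon$ argument concludes. You should replace your two unproved assertions by this counting step (or an equivalent one); without it the proof is incomplete at its critical point. Your final remark that Theorem \ref{deg} is the case $q=1$ after taking imaginary parts is consistent with the paper.
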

\begin{rem}
We have the analogue result: if we denote ${\hat a}_{l,q}^L$
the left  half of  ${\hat a}_{l,q}$, then
$\frac{1}{l^s}{\hat a}_{l,q}^L(s,\pi_l(.))$ converges to $(-1)^q E_{\A,q}(s,.)$, where $E_{\A,q}(s,.)$ is the Eisenstein series of weight $2q$ (see definition \ref{weight}) associated to $\A$, the other cusp arising from the left half-collar for $c_l$.
\end{rem}
Before proving this theorem we give some complements and its corollaries.

\medskip

For $\Re s>1$ let $b_q(s)=e^{i\pi q/2}\int_0^\pi(\sin u)^{s-2} e^{-iqu}\, du$. Note that $b_1(s)=k(s-1)$. The function $b_q$ has the following properties (see e.g. \cite{kramer}): $ b_q(s+2)=\displaystyle\frac{s(s-1)}{s^2-q^2}b_q(s)$,  $b_q$ admits a meromorphic continuation to all $s\in\CC$, more precisely
$$b_q(s)=\pi 2^{-s+2}\frac{\Gamma(s-1)}{\Gamma(\frac{s+q}{2})\Gamma(\frac{s-q}{2}
)}\, .$$
In order to be consistent with the definition of Kudla and Millson's hyperbolic Eisenstein series, we may use the normalized $q-$automorphic forms 
\begin{equation}\label{nor}
\Xi_{l,q}(s,z)=\frac{1}{b_q(s)}A_{l,q}(s,z).
\end{equation}
We have indeed $\Xi_{l,1}(s,z)=\Theta(s-1, z)$ defined in (\ref{lien}) section 2.1.\\
We recall that the series (\ref{nor}) converges absolutely and locally uniformly for any $z\in H$ and $s\in \CC$ with $\Re s>1$, and that it is invariant with respect to $\Gamma$. A straightforward computation shows that the series $A_{l,q}(s,z)$ satisfies the  functional  differential  equation:
\begin{equation*}
\Delta_q^{\pm}A_{l,q}(s,z)-s(1-s)A_{l,q}(s,z)=(s+q)(s-q)A_{l,q}(s+2,z),
\end{equation*}
and the series (\ref{nor})
\begin{equation*}
\Delta_q^{\pm}\Xi_{l,q}(s,z)-s(1-s)\Xi_{l,q}(s,z)=s(s-1)\Xi_{l,q}(s+2,z).
\end{equation*}

\begin{prop}
The  series $A_{l,q}(s,z)$(resp. $\Xi_{l,q}(s,z) $) admits a meromorphic continuation to all of $\CC$.
\end{prop}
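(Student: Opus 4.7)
The plan is to iterate the differential functional equation
\begin{equation*}
\Delta_q^{\pm}\Xi_{l,q}(s,z)-s(1-s)\Xi_{l,q}(s,z)=s(s-1)\Xi_{l,q}(s+2,z)
\end{equation*}
together with the meromorphic continuation of the resolvent $R_q^{\pm}(s)=(\Delta_q^{\pm}-s(1-s))^{-1}$, following the alternative strategy already sketched in the remark after theorem \ref{dual}. Rewriting the equation in the form
\begin{equation*}
\Xi_{l,q}(s,z)=s(s-1)\,R_q^{\pm}(s)\,\Xi_{l,q}(s+2,z),
\end{equation*}
the point is that whenever $\Xi_{l,q}(s+2,\cdot)$ is defined and lies in a function space on which $R_q^{\pm}(s)$ acts meromorphically in $s$, the left-hand side inherits the same meromorphic dependence.

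For the initial step, the series defining $\Xi_{l,q}(s+2,z)$ converges absolutely and locally uniformly on $\Re s>-1$. An $L^{2}$-type estimate analogous to the one given for $\Omega_{c}$ in section 3.4 (with the Fermi-coordinate bound $1/(\ch x_{2})^{\sigma+1}$ replaced by the weight-$q$ expression $(y/|z|)^{\sigma-q}$) places $\Xi_{l,q}(s+2,\cdot)$ in the natural target space of $R_{q}^{\pm}(s)$. Invoking the meromorphic continuation of $R_{q}^{\pm}(s)$ from $\Re s>1/2$ to all of $\CC$, valid on any geometrically finite hyperbolic surface of infinite volume (see \cite{borth}, \cite{ris}), one obtains a meromorphic extension of $\Xi_{l,q}(s,z)$ to $\Re s>-1$. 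Iterating, an extension already valid on $\Re s>-(2k-1)$ yields one on $\Re s>-(2k+1)$, and since every $s\in\CC$ eventually lies in such a half-plane, $\Xi_{l,q}$ is meromorphic on the whole of $\CC$. Its poles come from the resonances of $R_{q}^{\pm}$ and from the prefactor $s(s-1)$ accumulated along the iteration. The corresponding statement for $A_{l,q}(s,z)=b_{q}(s)\Xi_{l,q}(s,z)$ then follows immediately, since $b_{q}$ is itself meromorphic on $\CC$.

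The main obstacle is to match, at each iteration step, the asymptotic profile of $\Xi_{l,q}(s+2,\cdot)$ in the funnels and cusps with the precise functional-analytic class on which $R_{q}^{\pm}(s)$ is known to continue meromorphically. The resolvent of an infinite-volume surface is meromorphic only between suitably weighted or $0$-Sobolev spaces whose weights depend on $\Re s$, so the end-behavior of $\Xi_{l,q}$ (in the spirit of lemma \ref{prop1} and proposition \ref{prop2}) has to be tracked carefully in order to ensure that the iterated identity remains well-defined throughout the continuation.
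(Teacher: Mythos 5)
Your proposal follows essentially the same route as the paper: the paper's own (very brief) argument consists precisely of the shift equation together with the resolvent identity $\tilde{\Xi}_{l,q}=s(1-s)\int_{\mathcal D}G_s(z,z',q)\tilde{\Xi}_{l,q}(s+2,z')\,d\mu(z')$ and an appeal to the method of \cite{kmi} and the meromorphic continuation of the resolvent, which is exactly your iteration scheme. You are in fact more explicit than the paper about the step-by-step widening of the half-plane and about the weighted-space bookkeeping needed for the continued resolvent; the paper only additionally mentions an alternative computation via the Fourier development of $G_s(z,z',q)$ and Fay's results, which you do not need.
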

\begin{proof}
There are different ways to prove this; one is to use the  functional differential  equation (\ref{nor}) and to apply the method developed in \cite{kmi} (see also \cite{kramer}). More precisely let $\tilde{a}_{l,q}(s,z)$ (resp. $\tilde{ f}_{l,q}(s,z)$) the $q-$automorphic form associated to $A_{l,q}(s,z)$ (resp. $\Xi_{l,q}(s,z)$). We have 
\begin{equation*}
\Delta_{2q}\tilde{a}_{l,q}(s,z)+s(1-s)\tilde{a}_{l,q}(s,z)=(s+q)(q-s)\tilde{a}_{l,q}(s+2,z),
\end{equation*}
and 
\begin{equation}
\Delta_{2q}\tilde{f}_{l,q}(s,z)+s(1-s)\tilde{ f}_{l,q}(s,z)=s(1-s)\tilde{ f}_{l,q}(s+2,z);
\end{equation}
in other words
$$\tilde{ f}_{l,q}(s,z)=s(1-s)\int_{\mathcal D}G_s(z,z',q)\tilde{ f}_{l,q}(s+2,z')\, d\mu(z')\, .$$

\smallskip
Another way is to use \cite{fayreine}. We precise another calculation we will develop further. We can rewrite $\tilde{a}_{l,q}(s,z)$ as
$$\tilde{a}_{l,q}(s,z)=\sum_{\Gamma_l\backslash \Gamma}\left( \frac{c\bar{z}+d}{cz+d}\right)^q\left(\frac{\overline{\gamma z}}{\gamma z}\right)^{q/2}\left(\frac{\Im \gamma z}{|\gamma z|}\right)^s\, $$
and using the Fourier development of $G_s(z,z',q)$ and the expansion of $\int_1^{e^l}G_s(z,iy',q)\, d\ln y'\, $ we obtain the result (see \cite{fayreine} corollary 4.2 p.188).
\end{proof}
We can then refine the theorem \ref{auto} as
\begin{thm}\label{vitali}
 Let $\lambda_k^q$, $1\leq k\leq n$, be the eigenvalues of the laplacian $\Delta_{2q}$ on $S_0$ such that $0<\lambda_1 ^q< \lambda_2^q<... <\lambda_n^q<1/4\leq \lambda_{n+1}^q$ and the corresponding $s_k^q=\frac{1}{2} +\sqrt{\frac{1}{4}-\lambda_k^q}$ with $\Re s_k^q \geq 1/2$ and let $\Omega=\{\Re s>1/2\}\backslash \{s_1^q,...,s_n^q\}$.
\begin{enumerate}
\item In the case the geodesic $c_l$ is not separating, without loss of generality we can suppose that the two limiting cusps are represented by $\infty$ and $0$,  the family $(\frac{1}{l^s}{\hat f}_{l,q}(s,\pi_l(.)))_l$ converges uniformly on compact subsets of $S_0$ and on compact subsets of $\Omega$ to 
$\displaystyle\frac{1}{b_q(s)}E_{\infty, q}(s,.)+\frac{ (-1)^q}{b_q(s)} E_{0, q}(s,.)$. \label{one}
\item In the case $c_l$ is the geodesic boundary of a funnel, the family $(\frac{1}{l^s}{\hat f}_{l,q}(s,\pi_l(.)))_l$ converges uniformly on compact subsets of $S_0$ and $\Omega$ to  $\displaystyle\frac{1}{b_q(s)}E_{\infty, q}(s,.)$.\label{two}
\end{enumerate}
\end{thm}
\begin{rem}
We  remark that as $\sigma_l$ is the geodesic in the funnel,
then $D=\<\sigma_l\>\backslash (\Gamma_l - \<\sigma_l\>)$ and  $G=\emptyset$.

\end{rem}
We have seen that in the infinite volume case $s=1$ is a regular value for $E_{\infty, q}(s,z)$ for every $q\in \NN$. In the finite volume case  we know that $s=1$ is a simple pole for $E_{\infty, 0}(s,z)$, which is no more the case for $q\geq 1$:
\begin{rem}
For $q\in \NN^*$,  $s=1$ is a regular value of  $E_{\infty, q}(s,z)$.
\end{rem}
\begin{proof}
It comes from the Fourier development of $E_{\infty, q}(s,z)$, we have (\cite{fayreine}, p.175)
$$E_{\infty, q}(s,z)=y^s+\phi_q(s) y^{1-s}+\sum_{m\not=0}a_m(y,s)_q e^{2i\pi m x}\, ,$$
with $\phi_q(s)=\displaystyle\frac{\Gamma^2(s)}{\Gamma(s-q)\Gamma(s+q)}e^{-\pi i q}\phi_0(s)$. The function $\phi_0$ has a simple pole at $s=1$ (with residue $\text{Vol}(M_0)/\pi$) and $1/\Gamma(s-q)$ has a zero at $s=1$.
\end{proof}
We know give the poles of $b_q(s)$:
\begin{lemma}
For an even $q$, the poles of $b_q(s)$ are simples and at the numbers $s=1-2k$, $k\in \NN$.\\
For an odd $q$,  the poles of $b_q(s)$ are simples and at the numbers $s=-2k$, $k\in \NN$.
\end{lemma}
From the preceding results and theorem (\ref{vitali}) we deduce
\begin{cor}
With the preceding notations,
\begin{enumerate}
\item Suppose $q$ is odd. 	If the geodesic $c_l$ is not separating (resp.  the geodesic boundary of a funnel)  the family $(\frac{1}{l^s}{\hat a}_{l,q}(s,\pi_l(.)))_l$ converges uniformly on compact subsets of $S_0$ and on compact subsets of $\Omega$ to 
$E_{\infty, q}(s,.)+(-1)^q E_{0, q}(s,.)$ (resp. $E_{\infty, q}(s,.)$). 
\item Suppose $q$ is even, we have the same preceding results replacing $\Omega$ by $\Omega \backslash \{1\}$.
\end{enumerate}
\end{cor} 
\begin{rem}
For $q=1$ we obtain the result announced in the introduction.
\end{rem}
\subsection{ Proofs of the main theorems and final remarks.\\ \smallskip}


{\it Proof of theorem \ref{auto}.}

It is enough to demonstrate the convergence for $\beta$ a
relatively compact set in the fundamental domain $\mathcal F$.
Given $\epsilon>0$ denote $G_\epsilon$ the set of cosets and
representatives for $\<z\mapsto z+1\>\backslash \Gamma$ such that $\sup
\Im A(\mathcal F)<\epsilon$  for $[A]\not \in G_\epsilon$ and let $R_l$ be
the corresponding cosets of $\<z\mapsto z+1\>\backslash \tilde{\Gamma}_l$
with corresponding representatives. The set $G_\epsilon$ is finite.\\ The cosets $G_\epsilon$ of
$\Gamma$ satisfy (modulo the $\<z\mapsto z+1\>$ action) $\{0\leq \Re
w<1, \Im w>\epsilon\}\subset \cup_{A\in G_\epsilon}A(\mathcal F)$; thus
for $l$ sufficiently small the cosets  $R_l$ satisfy (modulo the
$\<z\mapsto z+1\>$ action) $\{0\leq \Re w<1, 2\epsilon<\Im
w<\pi/2l\}\subset \cup_{A\in R_l}A({\mathcal F}_l)$   (a
consequence of the convergence on compact subsets of the
$\tilde{f}_l$ and that ${\mathcal F}_l$ contains the
right half-collar for $c_l$ , $\{0\leq \Re w<1, c(l)\leq\Im
w<\pi/2l\}$). Now for a right coset $[A]\in R- R_l$ then
$A({\mathcal F}_l)$ lies below the $c_l$ geodesic $\{\Im
w=\pi/2l\}$ and is disjoint modulo the $\<z\mapsto z+1\>$ action from
$\{0\leq \Re w<1, 2\epsilon<\Im w<\pi/2l\}$, since the latter is
covered by the $R_l$ cosets. Thus for $[A]\in R-R_l$, modulo the
$\<z\mapsto z+1\>$ action, then $A({\mathcal F}_l)\subset \{ 0\leq \Re
w<1, \Im w<2\epsilon\}$.
For $w\in\beta$ we write 
\begin{eqnarray*}
\frac{1}{l^s}{\hat a}_{l,q}^R(s,w)&=& \frac{1}{l^{s-q}}(\Im w)^q(\sin l\Im w)^{s-q}(\chi +\sum_R (\tilde{\gamma}'w)^q|\tilde{\gamma}'w|^{s-q}) \\
&=&\frac{1}{l^{s-q}}(\Im w)^q(\sin l\Im w)^{s-q}(\chi +\sum_{R_l} (\tilde{\gamma}'w)^q|\tilde{\gamma}'w|^{s-q})\\
&+&\frac{1}{l^{s-q}}(\Im w)^q(\sin l\Im w)^{s-q}(\sum_{R-R_l} (\tilde{\gamma}'w)^q|\tilde{\gamma}'w|^{s-q}).
\end{eqnarray*}

 Because of lemma \ref{lem}, the principal problem lies in estimating the second sum: it remains to show that
 $$\lim_{l\to 0}\sum_{R-R_l} |\tilde{\gamma}'w|^\sigma=0\, ,$$
 where $\sigma=\Re s$.
$$\sum_{R-R_l} |\tilde{\gamma}'w|^\sigma=\sum_{w^{-1}(R-R_l)w}|z_l\frac{\gamma' z_l}{\gamma z_l}|^{\sigma}\leq |z_l|^\sigma \sum_{w^{-1}(R-R_l)w}|\gamma' z_l|^{\sigma}
\, ,$$
where $w=\frac{1}{l}\log z_l$ and $\gamma {\mathcal F}_l\subset w^{-1}(\{ 0\leq \Re
w<1, \Im w<2\epsilon\})$.
We deduce
$$\sum_{R-R_l} |\tilde{\gamma}'w|^\sigma\leq \frac{1}{\sin^{\sigma}(l\Im w)}\sum_{w^{-1}(R-R_l)w}\Im^{\sigma}(\gamma z_l)\, .$$
Let $\epsilon_0\in]0,\sinh^{-1}1[$ such that for $l\leq l_0(\epsilon)$ and $ w\in\beta$, $B(z_l,\epsilon_0)\subset {\mathcal F}_l$.
  Then there exists $\Lambda_{\epsilon_0}$ independent of $z_0$ so that
$$\int_{B(z_0,\epsilon_0)}y^\sigma\frac{dxdy}{y^2}=\Lambda_{\epsilon_0}y(z_0)^\sigma\, .$$
So $$\sum_{w^{-1}(R-R_l)w}\Im^{\sigma}(\gamma z_l)^{\sigma}=\frac{1}{\Lambda_{\epsilon_0}}\sum_{w^{-1}(R-R_l)w}\int_{B(\gamma z_l,\epsilon_0)}y^\sigma\frac{dxdy}{y^2}\, .$$
Now (see e.g. \cite{w2} p.102):
\begin{lemma}
The multiplicity of the projection map $H\rightarrow \Gamma\backslash
H$ restricted   to $B(z_0,\eta)$ with $2\eta<c_0$ is at
most $M\rho^{-2}(z_0)$, where $M$  is a constant and $\rho(z_0)$ the
injectivity radius at $z_0$.
\end{lemma}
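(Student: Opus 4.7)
The plan is a volume packing argument. Fix a point $p\in\pi(B(z_0,\eta))$, choose a preimage $z_*\in B(z_0,\eta)\cap\pi^{-1}(p)$, and describe every other preimage of $p$ in $B(z_0,\eta)$ as a $\Gamma$-translate $\gamma z_*$ of $z_*$; the multiplicity $m$ equals the number of such $\gamma\in\Gamma$.

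Any two distinct translates $\gamma_1 z_*,\gamma_2 z_*\in B(z_0,\eta)$ satisfy $d(\gamma_1 z_*,\gamma_2 z_*)\le 2\eta<c_0$, while by definition of the injectivity radius $d(\gamma_1 z_*,\gamma_2 z_*)\ge 2\rho(p)$ (with $\rho(p)$ the injectivity radius of $M$ at $p$). Either $\rho(p)>\eta$, in which case $m\le 1$ and there is nothing to do, or $\rho(p)\le\eta$, which is the only case requiring an estimate.

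In the non-trivial case the open hyperbolic discs of radius $\rho(p)$ centred at the $m$ preimages are pairwise disjoint and all contained in $B(z_0,\eta+\rho(p))\subset B(z_0,2\eta)$. Comparing hyperbolic areas yields
$$m\;\le\;\frac{\operatorname{area}(B(z_0,2\eta))}{\operatorname{area}(B(\rho(p)))}\;\le\;\frac{C}{\rho(p)^2},$$
where the constant $C$ depends only on $c_0$, since $2\eta<c_0$ and $\sinh^2(\rho/2)\ge c_1\rho^2$ for $\rho\le c_0$.

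The last step is to convert $\rho(p)^{-2}$ into $\rho(z_0)^{-2}$. Since $d_M(p,\pi(z_0))\le\eta<c_0/2$, the Margulis lemma places $p$ and $\pi(z_0)$ in a common thin-or-thick component of the $c_0$-decomposition, and on such a component the injectivity radius has bounded distortion ($\rho$ is bounded below on the thick part, while in a cusp $\rho\sim 1/y$ and in a collar of a short geodesic $\rho(z)=l/(2\sin\theta)$ makes the ratio explicit). This yields $\rho(p)\ge c'\rho(z_0)$ for a constant $c'=c'(c_0)$, and substituting gives the required estimate $m\le M\rho(z_0)^{-2}$ with $M=C/(c')^2$. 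The only delicate point of the argument is this final ratio bound; it is automatic in the thick part (where $\rho(z_0)$ is bounded below by a universal constant) and requires the explicit thin-part geometry in the remaining case, which is precisely where the collar and cusp lemmas stated earlier in this section are used.
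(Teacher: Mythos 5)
Your argument is correct, but it routes through an extra step that the paper avoids. You pack disjoint balls of radius $\rho(p)$ around the preimages $\gamma z_*$ of a generic point $p$, which forces you to end with a comparison $\rho(p)\geq c'\rho(z_0)$; as you note yourself, that comparison is the delicate point, and it needs the explicit thin-part geometry (the additive Lipschitz bound $|\rho(p)-\rho(z_0)|\leq d(p,z_0)$ is not enough, since in a cusp or collar the injectivity radius can be arbitrarily small compared to $\eta$). The paper sidesteps this entirely: it observes that the multiplicity over any point of $\pi(B(z_0,\eta))$ is bounded by the number of $\gamma$ with $B(\gamma z_0,\eta)\cap B(z_0,\eta)\neq\emptyset$, and for those $\gamma$ the balls $B(\gamma z_0,\rho(z_0))$ are pairwise disjoint (since $d(z_0,\gamma z_0)\geq 2\rho(z_0)$) and contained in $B(z_0,2\eta+\rho(z_0))\subset B(z_0,3c_0/2)$, giving directly $m\cdot\mu(B(\rho(z_0)))\leq\mu(B(3c_0/2))$ and hence $m\leq 2(\cosh(3c_0/2)-1)\rho(z_0)^{-2}$. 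In other words, by centering the packing on the orbit of $z_0$ itself rather than on the orbit of a preimage of $p$, the injectivity radius that appears is $\rho(z_0)$ from the start and no distortion estimate is needed. Your version is salvageable (the ratio bound does hold, with $c'$ depending only on $c_0$, by the cusp and collar formulas), but you should either prove that ratio bound carefully or switch to the paper's centering, which makes the whole proof a three-line volume comparison.
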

We give a detailed proof for the convenience of the reader.
\begin{proof} If $B(z_0,\eta)\cap B(\gamma
z_0,\eta)\not =\emptyset,\, \gamma\in \Gamma$, then
$d(z_0,\gamma z_0)<2\eta<c_0$ and $z_0$ is in a cusp region or
the collar for a short geodesic. Let $c=c_0/2$, then we have
$\rho(z_0)<c$. Set $m(\eta)$ the multiplicity of the projection
restricted   to $B(z_0,\eta)$. As $2\eta +\rho(z_0)<3c$ and the
$B(\gamma z_0,\rho(z_0))$ are disjoints we have
$$m(\eta)\mu(B(z_0,\rho(z_0))\leq \mu(B(z_0,3c))\, .$$
So $m(\eta)\leq \frac{\cosh 3c -1}{\cosh \rho(z_0) -1}\leq
2(\cosh 3c -1)\rho(z_0)^{-2}$.
\end{proof}
Then we have
$$ \sum_{w^{-1}(R-R_l)w}\Im^{\sigma}(\gamma z_l)^{\sigma}\leq \frac{A(c_0)}{\Lambda_{\epsilon_0}}\rho^{-2}(z_l)\frac{e^{l\sigma}-1}{\sigma}\frac{(2\epsilon l)^{\sigma-1}}{\sigma-1}\, .$$
Moreover as $B(z_l,\epsilon_0)\subset {\mathcal F}_l$, $\rho(z_l)\geq \epsilon_0$ and the conclusion follows.

\bigskip

\noindent{\it Proof of theorem \ref{vitali}.}

The main point of the proof is to use Vitali's theorem as in the finite volume case.
More precisely, first  recall the 
\begin{defn}
A family $(f_n)$ of meromorphic functions on a domain $\mathcal O$ in $\mathbb C$ is said bounded in $\mathcal O$ if
\begin{enumerate}
\item $\exists (z_m)_m$ a discrete subset in $\mathcal O$,
\item $\forall K$ compact set of ${\mathcal O}\backslash \{z_m\}$, $\exists n(K)$, $\forall n\geq n(K)$, $f_n$ has no pole in $K$,
\item $M_K=\sup_{n\geq n(K)}(\sup_{z\in K}|f_n(z)|)< +\infty$.
\end{enumerate}
\end{defn}
As before $\beta$ will denote a relatively compact set in the fundamental domain $\mathcal F$ of $S_0$. The family $(\frac{1}{l^s}{\hat f}_{l,q}(s,\pi(w)))_l$, where ${\hat f}_{l,q}(s,w)=\frac{1}{b_q(s)}{\hat a}_{l,q}(s,w)$ is a family of meromorphic functions on $\Omega=\{\Re s>1/2\}\backslash \{s_1^q,...,s_n^q\} $; because of theorem (\ref{auto}), for every compact $C\subset \{\Re s>1\}$   it converges uniformly (a fortiori simply) on $C\times \beta \subset \{\Re s>1\}\times {\mathcal F}$ to
\begin{enumerate}
\item  $\frac{1}{b_q(s)}(E_{\infty, q}(s,.)+(-1)^q E_{0, q}(s,.))$ in  case (\ref{one}) of theorem (\ref{auto}),
\item $\frac{1}{b_q(s)} E_{\infty, q}(s,.)$ in case (\ref{two}).
\end{enumerate}
We'll show that for every $K$  compact set in  $\Omega$, there exists $l(K)$ such that for all $l$ less than $l(K)$, $s\mapsto  \frac{1}{l^s}{\hat f}_{l,q}(s,\pi(.))$ has no pole on $K$ and 
$M_K=\sup_{l\leq l(K)}(\sup_{z\in K}||\frac{1}{l^s}{\hat f}_{l,q}(s,\pi(.)) ||_{\infty, \beta})< +\infty$ then we can conclude.\\

The frame of the proof being independent of $q$, we will suppose $q=0$ and only indicate the notable changes in the general $q$ case, when there are. 
As in the finite volume case we start from the equality
$$ \tilde{ f}_{l}(s,z)/l^s=s(1-s)\int_{\mathcal D}G_s(z,z')\tilde{ f}_{l}(s+2,z')/l^s\, d\mu(z')\, ;  $$
where, to simplify the notations, we omit to write $q=0$.\\
We will denote $C$ a compact subset of $S_0$,  $Y\subset S_0$ such that $\pi_l(Y)$ is a standard collar and $\pi_l(Y)\cap\pi_l(C)=\emptyset$, $X=S_0\backslash Y$, $K'$ a compact subset of $\Omega'=\Omega\backslash \{1\}$. 
Cut the preceding integral as $I_l+J_l$ where
$$I_l= \int_{\pi_l(Y)}G_s(z,z')\tilde{ f}_{l}(s+2,z')/l^s\, d\mu(z')\, ,\,\,  \,\, \,  \, \, \, J_l=\int_{\pi_l(X)}G_s(z,z')\tilde{ f}_{l}(s+2,z')/l^s\, d\mu(z')\,\,.$$
The main step is now to estimate $ G_s(z,z',q)  $. We apply \cite{fayreine} Corollary 1.3 p.151:
\begin{prop}\label{mean}
If $\Delta_{2q}f=s(s-1)f$ in some non-Euclidien disc $B_r$ of radius $r$ about $z_0\in H$, then $f$ has the mean value property:
$$f(z_0)=\frac{1}{m_q(r,s)}\int_{B_r}f(z)\left(\frac{z-\bar{z_0}}{z_0-\bar{z}}\right)^q d\mu(z)\,
\text{ where }\, 
m_q(r,s)=2\pi \int_1^r\sh r P_{s,q}(r)\approx\pi r^2\,\,\,\, \text{as}\,\,\, r\to 0\, ;$$
\end{prop}
to conclude that $\forall w\in C$, $\forall z'\in\pi_l(Y)$, $\forall s\in K'$, $|G_s(\pi_l(w),z',q)|\leq O(1) ||G_s(\pi_l(w),.,q)||_{L_2}$  (see also \cite{preprint} Lemme 4.4).\\
 The next point is to show that $\forall (w,s)\in C\times K'$, $||G_s(\pi_l(w),.,q)||_{L_2}=O(1)$ (see also \cite{preprint} Lemme 4.5), the constants in question depending only of the compact sets. For this we use the following spectral decomposition:
\begin{lemma}
For $a>1$ 
$$G_s(\pi_l(w),z,l)=G_a(\pi_l(w),z,l)+ \sum_n \left[\frac{1}{s(1-s)-\lambda_n}-\frac{1}{a(1-a)-\lambda_n}\right]\phi_n(\pi_l(w)\overline{\phi_n(z)}+$$
$$\frac{1}{4\pi i}\int_{-\infty}^{+\infty}\left[\frac{1}{s(1-s) -(1/4 +t^2)}-\frac{1}{a(1-a) -(1/4+t^2)}\right]E_c(\pi_l(w),\frac{1}{2}+it)\overline{E_c(z,\frac{1}{2}+it)}\, dt +$$
$$\frac{1}{4\pi i}\int_{-\infty}^{+\infty}\left[\frac{1}{s(1-s) -(1/4 +t^2)}-\frac{1}{a(1-a) -(1/4 +t^2)}\right]\int_1^{\lambda_f^2}E_f(\pi_l(w),\frac{1}{2}+it,b)\overline{E_f(z,\frac{1}{2}+it,b)}\, db\, dt\, .$$
\end{lemma}
From which we deduce, with the notation $\lambda_u=u(1-u)$

\begin{lemma}
$$||G_s(\pi_l(w),.,l)-G_a(\pi_l(w),.,l)||_{L^2}^2=|\lambda_a -\lambda_s|^2\sum_n\frac{|\phi_n(\pi_l(w)|^2}{|\lambda_s-\lambda_n(l)|^2|\lambda_a-\lambda_n(l)|^2} +$$
$$|\lambda_a -\lambda_s|^2\frac{1}{4\pi}\int_{-\infty}^{+\infty}\frac{|E_c(\pi_l(w),1/2 +it)|^2}{|\lambda_s-(1/2+t^2)|^2|\lambda_a-(1/2+t^2)|^2}\, dt +$$
$$ |\lambda_a -\lambda_s|^2\frac{1}{4\pi}\int_{-\infty}^{+\infty}\frac{1}{|\lambda_s-(1/2+t^2)|^2|\lambda_a-(1/2+t^2)|^2}\int_1^{\lambda_f^2}|E_f(\pi_l(w),1/2 +it,b)|^2\,db\, dt  \, .$$
\end{lemma}
So there exists a constant $M$ depending only on the compact $K'$ of $\Omega'$ such that
$$||G_s(\pi_l(w),.,l)-G_a(\pi_l(w),.,l)||_{L^2}^2\leq \frac{|\lambda_a -\lambda_s|^2}{M}||G_a(\pi_l(w),.,l)||_{L^2}^2\, .$$
This allows for concluding that $I_l=O(1)$ and $J_l=O(l^2)$.

\begin{rem} 1) We refer also to the result of J.Fay
\cite{jfay} final remark p.201-202. \\
2) It is possible to establish a link  between hyperbolic Eisenstein series and generalized eigenfunctions- I thank F.Naud for this suggestion- and to use the first remark to conclude that as $l\to 0$
$$\int_0^l E^f_l(s,z,t)e^{ts}\, dt \to E_\infty(s,z)\, .$$ \end{rem}

{\bf Application.}
One of the application we can think about is in studying the degeneration of the residues of the hyperbolic Eisenstein series. Let us give an example: we look at the case of a degenerating family of compact Riemann surfaces, a non-separating geodesic being pinched, with the family of scalar-valued hyperbolic Eisenstein series degenerating. Remember that we obtained that the series $\frac{1}{l^s} A_l(s,z)$ converges uniformly on compact of $S_0$ to $E_\infty(s,z) + E_0(s,z)$. The last sum of Eisenstein series has no poles on $[1/2,1]$ except at $s=1$ and  for a finite number of $s_k=\frac{1}{2}+\sqrt{\frac{1}{4}-\lambda_k}$ where the $\lambda_k$ correspond to the residual spectrum. In other words if $(\lambda_k(l))$ converges to $\lambda_k$
 where $\lambda_k$ is a small cuspidal eigenvalue, then $Res(\frac{1}{l^s} A_l(s,z))\to 0$, otherwise $Res(\frac{1}{l^s} A_l(s,z))\to Res(E_\infty(s,z) + E_0(s,z))$. There are many restrictions to go ahead with calculation. First of all we are only dealing with small eigenvalues and so here  there is no hope to characterize embedded eigenvalues through degeneration. Moreover we need to take into account the multiplicity of an eigenvalue $\lambda_k(l)$. So the easiest result we can obtained is a characterization of the residual spectrum -remember that an eigenvalue in the residual spectrum is simple- with $\alpha_k(l)=\int_{c_l}\psi_{k,l}(z)\, d\mu(z)$ where the eigenfunction $\psi_{k,l}$ is associated to the eigenvalue $\lambda_k(l)$, if $\lambda_k$ is a pole of $E_\infty(s,z) + E_0(s,z)$ then
 $\alpha_k(l)=O(l^{1/2 +\sqrt{1/4 -\lambda_k(l)}})$
 and otherwise $\alpha_k(l)=o(l^{1/2 +\sqrt{1/4 -\lambda_k(l)}})$.\\
 


\begin{thebibliography}{}


\bibitem{borth}
 D. Borthwick,
 \emph{ Spectral Theory of
Infinite-Area Hyperbolic Surfaces}, volume 256,
 Progress in Mathematics,
 Birkh{\"a}user Boston Inc., Boston, MA, 2007.


\bibitem{carron}
G. Carron,
  $L^2$ harmonic forms on non-compact Riemannian  manifolds,
 Surveys in analysis and operator theory (Canberra, 2001).
\bibitem{els}
J. Elstrodt,
  Die Resolvente zum Eigenwertproblem der automorphen Formen in der hyperbolischen Ebene,
  Teil I.  Math. Ann. \textbf{203} (1973) 295--330,Teil II.  Math. Z. \textbf{132} (1973) 99--134, Teil III.  Math. Ann.\textbf{208} (1974) 99--132.

\bibitem{preprint}
 T. Falliero,
  D\'eg\'en\'erescence
de s\'eries d'Eisenstein hyperboliques,
Math.Ann. \textbf{339} (2007), 341--375.

\bibitem{mpi}
\bysame,
 Harmonic differentials and
infinite geodesics joining two punctures on a Riemann surface,
Trans.Amer.Math.Soc. \textbf{363} (2011), 3473--3488.

\bibitem{jfay}
J.D. Fay,
\emph { Theta Functions on Riemann Surfaces},
 Lecture Notes in Math., vol. 352, Springer-Verlag, Berlin 1973.


\bibitem{fayreine}
\bysame,
 Fourier coefficients of the resolvent for a Fuchsian group,
 J.Reine Angew. Math. \textbf{293}  (1977), 143--203.

\bibitem{jorg}
 D. Garbin, J. Jorgenson and M. Munn,
  On the
appearance of Eisenstein series through degeneration,
 Comment.Math.Helv. \textbf{83}  (2008), 701--721.

\bibitem{pippich}
 D. Garbin and A.-M. V.Pippich,
 On the
Behavior of Eisenstein Series Through Elliptic Degeneration,
 Commun.Math.Phys. \textbf{292} (2009), 511--528.

\bibitem{gerar}
P. G\'erardin,
 {\emph Formes automorphes associ\'ees aux cycles g\'eod\'esiques des
  surfaces de Riemann hyperboliques},
 Lecture Notes in Math., 901, Novembre 1980.


\bibitem{h1}
D.A. Hejhal,
 {\emph The Selberg Trace Formula for PSL(2,R), vol.1.}
 Lecture Notes in Math., vol. 548, Springer-Verlag, Berlin 1976.

\bibitem{h2}
\bysame,
 {\emph The Selberg Trace Formula for PSL(2,R), vol.2.}
 Lecture Notes in Math., vol. 1001, Springer-Verlag, Berlin  1983.




\bibitem{he}
\bysame,
 Regular b-groups, Degenerating Riemann surfaces and Spectral Theory,
 Mem.Am.Math.Soc., \textbf{88} (437), November 1990.

\bibitem{heker}
\bysame,
 Kernel functions, Poincar\'e series, and LVA.,
  Contemp. Math. \textbf{256} (2000), 173--201.





\bibitem{jit}
L. Ji,
 Spectral degeneration of hyperbolic Riemann surfaces,
 J.Differ. Geom. \textbf{38} (1993), 263--313.

\bibitem{kramer}
 J.Jorgenson, J.Kramer and A.-M. v.Pippich,
 On the spectral expansion of hyperbolic Eisenstein
series,
Math.Ann. \textbf{346} (2010), 931--947.



\bibitem{kmi}
J. Kudla  and S. Millson,
 Harmonic differentials and closed geodesics on a Riemann surface,
 Invent.Math. \textbf{54} (1979), 193--211.


\bibitem{o2}
 K. Obitsu,
  The asymptotic Behavior of Eisenstein Series and a Comparison
of the Weil-Petersson and the Zograf-Takhtajan Metrics,
 Publ. Res. Inst. Math. Sci. \textbf{37} (2001), 459--478.


\bibitem{o4}
 \bysame,
  Asymptotics of degenerating Eisenstein series,
 Infinite dimensional Teichm\"{u}ller spaces and moduli spaces, 115–126, RIMS K\^{o}ky\^{u}roku Bessatsu, B17, Res. Inst. Math. Sci. (RIMS), Kyoto, 2010.



\bibitem{patt}
 S. Patterson,
 The Laplacian operator on a
Riemann surface, I, II, III,
 Compositio Math.
\textbf{31} (1975), 83--107, \textbf{32} (1976), 71--112, \textbf{33} (1976), 227--310.


\bibitem{ris}
 M. Risager, 
 On the distribution of modular symbols for compact surfaces, 
 Int. Math. Res. Not.
\textbf{41} (2004), 2125--2146.





\bibitem{w2}
S.A. Wolpert,
 Spectral limits for hyperbolic surfaces, II,
 Invent.Math. \textbf{108} (1992), 91--129.


\end{thebibliography}
\end{document}